\newcommand{\Q}{\mathbb Q}
\newcommand{\C}{\mathbb C}
\newcommand{\Z}{\mathbb Z}
\newcommand{\F}{\mathbb F}
\newcommand{\frakp}{\mathfrak p}
\newcommand{\PP}{\mathbb P}
\newcommand{\calO}{\mathcal O}
\newcommand{\bmx}{\left( \begin{matrix}}
\newcommand{\emx}{\end{matrix} \right)}
\newcommand{\poly}{\varphi} 
\newcommand{\polytwo}{\psi}
\newcommand{\iz}{is}
\DeclareMathOperator{\GL}{GL}
\DeclareMathOperator{\SL}{SL}
\DeclareMathOperator{\PGL}{PGL}
\DeclareMathOperator{\Nm}{Nm} 
\DeclareMathOperator{\End}{End} 
\DeclareMathOperator{\Pic}{Pic}
\DeclareMathOperator{\Jac}{Jac} 
\DeclareMathOperator{\Gal}{Gal} 
\DeclareMathOperator{\Aut}{Aut} 
\DeclareMathOperator{\diag}{diag}
\DeclareMathOperator{\Spec}{Spec}
\newcommand{\overbar}[1]{\mkern 1.5mu\overline{\mkern-1.5mu#1\mkern-1.5mu}\mkern 1.5mu}
\newcommand{\sslash}{\mathbin{/\mkern-6mu/}}
\newcommand{\PreserveBackslash}[1]{\let\temp=\\#1\let\\=\temp}
\newcolumntype{C}[1]{>{\PreserveBackslash\centering}p{#1}}
\numberwithin{algorithm}{section}
\newtheorem{lemma}[algorithm]{Lemma}
\newtheorem{prop}[algorithm]{Proposition}
\crefname{prop}{Proposition}{Propositions}
\newtheorem{thm}[algorithm]{Theorem}
\newtheorem{cor}[algorithm]{Corollary}
\newtheorem{question}[algorithm]{Question}
\theoremstyle{definition}
\newtheorem{ex}[algorithm]{Example}
\theoremstyle{remark}
\newtheorem{rem}[algorithm]{Remark}
\numberwithin{equation}{section}
\begin{document}

\title{Generic models for genus 2 curves \\ with real multiplication}
\author{Alex Cowan}
\address{Department of Mathematics, Harvard University, Cambridge, MA 02138 USA}
\email{cowan@math.harvard.edu}

\author{Sam Frengley}
\address{School of Mathematics, University of Bristol, Bristol, BS8 1UG, UK}
\email{sam.frengley@bristol.ac.uk}

\author{Kimball Martin}
\address{Department of Mathematics, University of Oklahoma, Norman, OK 73019 USA}
\email{kimball.martin@ou.edu}

\date{\today}

\begin{abstract}
  Explicit models of families of genus 2 curves with multiplication by $\sqrt D$
  are known for $D= 2, 3, 5$.  We obtain generic models for genus 2 curves
  over $\mathbb Q$ with real multiplication in 12 new cases, including all fundamental discriminants $D < 40$.
  A key step in our proof is to develop an algorithm for minim{\iz}ation of conic bundles fibred over $\PP^2$. We apply this algorithm to simplify the equations for the Mestre conic associated to the generic point on the Hilbert modular surface of fundamental discriminant $D < 100$ computed by Elkies--Kumar.
\end{abstract}

\maketitle

%
%

\section{Introduction}
\label{sec:introduction}
Let $C$ be a genus 2 curve over a field $k$ of characteristic $0$ and let $D > 0$ be a fundamental discriminant. Let $\Jac(C)$ denote the Jacobian of $C$. We say $C$ has RM $D$ if it has real multiplication by the quadratic order $\calO_D$ of discriminant $D$, i.e., if $\calO_D$ embeds into the ring of endomorphisms of $\Jac(C)$ fixed by the Rosati involution.  Families of genus 2 curves with RM 5 and RM 8 have been known for some time (e.g., \cite{mestre:families,brumer:rank,Bending_paper}), however these families do not provide simple ways to parametr{\iz}e genus 2 curves with RM $D$ over $k$ (even up to twists, i.e., $\bar k$-isomorphism).  Moreover the methods used to construct these families are very specific to $D=5$ and $D=8$.

In this paper we develop a method to (i) give generic models for genus 2 curves with RM $D$, and (ii) parametr{\iz}e such curves up to twists.  We successfully carry out this method for many fundamental discriminants $D$ (including all 11 positive fundamental discriminants $D < 40$), under the assumption that $k$ has characteristic $0$.

\begin{thm} \label{thm:models}
  Let $D \in \{5, 8, 12, 13, 17, 21, 24, 28, 29, 33, 37, 44, 53, 61 \}$ and let $k$ be a field of characteristic $0$.
  Let $F_D(z, g, h, r, s; x) \in \Q(z,g,h,r,s)[x]$ be the sextic polynomial given in the electronic data associated to this paper \cite{electronic}.  
  Let $\mathscr{L}_D / \Q$ denote the conic bundle given by the vanishing of 
  \begin{equation*}
    z^2 - \lambda_D \quad \text{and} \quad r^2 - D s^2 - q_D
  \end{equation*}
  in $\mathbb{A}^5$, where $\lambda_D, q_D \in \Z[g,h]$ are the polynomials defined via \eqref{eq:EK-model} and \cref{tab:qD} respectively. Then:
  \begin{enumerate}[label=\textnormal{(\roman*)}]
  \item
    \label{iii:gen-fam}
    The family of genus $2$ curves given by the Weierstrass equations
    \begin{equation*}
      y^2 = F_D(z, g, h, r, s; x) 
    \end{equation*}
    for $(z,g,h, r, s) \in \mathscr{L}_D(k)$  provides a generic model (in the sense of \Cref{rem:genericity}) for genus $2$ curves with RM $D$ defined over $k$.

  \item
    Generically, two such curves are $\bar k$-isomorphic if and only if the corresponding points on $\mathscr{L}_D$ have the same image under the forgetful map $\mathscr{L}_D \to \mathbb{A}^2$ given by $(z,g,h,r,s) \mapsto (g,h)$.  
  \end{enumerate}
\end{thm}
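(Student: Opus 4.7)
The plan is to interpret each coordinate of $\mathscr{L}_D$ moduli-theoretically, reducing the theorem to a symbolic Igusa-invariants identity together with a $k$-rationality (descent) argument. After Elkies--Kumar, a generic point $(g, h) \in \mathbb{A}^2(\bar k)$ is a coordinate on a birational model of the Hilbert modular surface $Y_{-}(D)$, parameterizing principally polarized abelian surfaces $A$ with a real multiplication $\calO_D \hookrightarrow \End(A)$. The double cover $z^2 = \lambda_D(g, h)$ in the definition of $\mathscr{L}_D$ should rigidify an additional datum (e.g., a generator of $\calO_D$, or a splitting of a Brauer class on the surface) so that $(g, h, z)$ coordinatizes an \emph{oriented} version of $Y_{-}(D)$. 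For each such oriented $A$, Mestre's construction associates a conic over the residue field whose $k$-points parameterize Weierstrass sextic models $y^2 = f(x)$ of a genus 2 curve $C$ with $\Jac(C) \cong A$ over $k$; after the minimization algorithm developed earlier, this conic admits the explicit form $r^2 - D s^2 = q_D(g, h)$, so $\mathscr{L}_D(k)$ exactly parameterizes tuples (oriented $A/k$ with RM $D$, Weierstrass model over $k$ of a $C$ with $\Jac(C) \cong A$).

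To verify that the explicit sextic $F_D$ realizes this moduli interpretation, I would compute the Igusa invariants of $F_D(z, g, h, r, s; x)$ symbolically as elements of $\Q(g, h, z, r, s)$ and check that they agree with Elkies--Kumar's expressions for the Igusa functions on $Y_{-}(D)$; the answer must be independent of $(r, s)$, since varying the Mestre conic point merely quadratic-twists the Weierstrass model. This reduces the ``generic RM $D$'' claim in (i) to a finite polynomial identity, carried out in a computer algebra system using the data supplied in \cite{electronic}, and further sanity-checked by specialization to CM points and to the classical RM 5 and RM 8 families of Mestre and Brumer.

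For the converse direction in (i), let $C / k$ have RM $D$. Its Igusa invariants define a $k$-rational point on the image of $Y_{-}(D)$ in $\calM_2$, and generically this lifts to $(g_0, h_0) \in \mathbb{A}^2(k)$ because the Elkies--Kumar parameterization is defined over $\Q$. Galois-stability of the $\calO_D$-action on $\Jac(C)$ then forces $\lambda_D(g_0, h_0)$ to be a square in $k$, yielding $z_0 \in k$. Any $k$-rational sextic model of $C$ provides a $k$-point $(r_0, s_0)$ on the Mestre conic, so $(z_0, g_0, h_0, r_0, s_0) \in \mathscr{L}_D(k)$ and $y^2 = F_D(z_0, g_0, h_0, r_0, s_0; x) \cong C$ over $k$ by the moduli description. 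Part (ii) is then essentially formal: two points of $\mathscr{L}_D$ sharing $(g, h)$ correspond to the same point on $Y_{-}(D)$, hence to $\bar k$-isomorphic Jacobians, and generically to $\bar k$-isomorphic curves by Torelli; conversely, $\bar k$-isomorphic curves have the same Igusa invariants and thus the same generic $(g, h)$.

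The main obstacle is the symbolic Igusa-invariants verification: the rational functions in $(g, h, z, r, s)$ involved have large degree, and it is precisely the minimization algorithm of earlier sections (which reduces the Mestre conic to the manageable form $r^2 - D s^2 = q_D(g, h)$) that makes this computation feasible across all fourteen discriminants. A secondary subtlety is the $k$-rational descent in the converse direction: one must argue that the three conditions ($(g, h)$, $z$, and $(r, s)$ all in $k$) are simultaneously compatible at a given RM $D$ curve, rather than requiring independent field extensions to realize each piece of data.
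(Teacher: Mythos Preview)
Your overall strategy is sound and would work, but it diverges from the paper's proof in one structural way and contains one conceptual slip.

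First the slip: the pair $(g,h)$ are Elkies--Kumar coordinates on the \emph{Humbert surface} $\mathcal{H}_D$, not on $Y_-(D)$; the double cover $z^2=\lambda_D$ is precisely what recovers $Y_-(D)$ from $\mathcal{H}_D$ (it records the choice of embedding $\iota\colon\calO_D\hookrightarrow\End^\dagger$), not an ``oriented version'' of $Y_-(D)$. This matters because, by \cref{prop:pD-is-a-norm}, the Mestre conic over $\Q(\mathcal{H}_D)$ has the form $X^2-D\lambda_D Y^2-q_D Z^2$, and it is only after adjoining $z=\sqrt{\lambda_D}$ (i.e., passing to $\Q(Y_-(D))$) that one can absorb $\lambda_D$ into $Y$ and obtain the clean shape $r^2-Ds^2=q_D$. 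So the role of $z$ is not an auxiliary rigidification but exactly what makes the second defining equation of $\mathscr{L}_D$ take the advertised form. Relatedly, for the converse direction the genericity condition (\cref{rem:genericity}) hands you a $k$-point on $Y_-(D)$ to begin with, so $z_0\in k$ is given rather than deduced from Galois-stability of the RM.

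As for the route: the paper does not verify Igusa invariants of $F_D$ at all. Instead it applies the transformations of \cref{thm:pDs} simultaneously to the Mestre conic and the Mestre \emph{cubic}, obtaining a transformed cubic $\widetilde{R}_D$; then $F_D$ is nothing but $\widetilde{R}_D$ evaluated on the stereographic parametrisation of $\widetilde{L}_D$ from the tautological point $(r,s,1)$. Mestre's \cref{prop:conic-cubic} then gives the RM claim directly, with no further symbolic check beyond confirming $F_D$ matches the stored data. Your Igusa-invariants verification would certainly also establish (i), and has the virtue of being Mestre-theory-free on the forward direction; but since $F_D$ was manufactured via the conic--cubic intersection in the first place, the paper's argument is shorter and sidesteps the large rational-function computation you flag as the main obstacle.
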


\cref{thm:models}\ref{iii:gen-fam} provides families in 5 parameters satisfying 2 relations, but for many such $D$ one can do much better. Indeed, when $D \leq 17$ we show in \cref{sec:3-param-models} that the threefold $\mathscr{L}_D$ is rational over $\Q$. By parametrising $\mathscr{L}_D$ we give generic families in 3 parameters with no relations. In fact, when $D=17$ a generic family exists in 2 parameters with no relations. To illustrate this we present the families for $D=12$ and $17$.

\begin{cor} [A generic RM 12 family]
  \label{thm:RM12}
  Let $k$ be a field of characteristic $0$. For each  $a,b,c \in k$ consider the cubic $k$-algebra $L = k[r]/\xi(r)$ where
  \begin{equation*}
    \xi(r) = r^3 - 3(a^2 - 3 b^2) r + 2 (a^2 - 3 b^2),
  \end{equation*}
  and let
  \begin{flalign*}
    \poly(x) =\,\, & (r^2 + (2 a - 3 b) r - (a^2 + 2 a - 3 b^2 - 3 b c - 3 b)) x^2 - 6((a - 2 b) r - a c - a + 2 b)x \\
                   & - 3 (r^2 - (2 a - 3 b) r - (a^2 - 2 a - 3 b^2 + 3 b c + 3 b)).
  \end{flalign*}
  If $\xi(r)$ has no repeated roots we write $f_{12}(a,b,c;x) = \Nm_{L/K} \poly(x)$. If $f_{12}(a,b,c;x)$ has no repeated roots then the Jacobian $J$ of the genus $2$ curve $C/k$ with Weierstrass equation $C: y^2 =  f_{12}(a,b,c;x)$ has RM $12$ over $\bar k$, and if $\End_{\overbar k}(J)$ is abelian then the RM is defined over $k$.  Moreover, this is a generic family of genus $2$ curves with RM $12$ over $\Q$.
\end{cor}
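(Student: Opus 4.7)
The plan is to specialize \cref{thm:models} at $D = 12$ and to eliminate the auxiliary variables $z, r, s$ via a rational parametrization of the threefold $\mathscr{L}_{12}$. Concretely, I would first extract from the electronic data the polynomials $\lambda_{12}(g,h)$, $q_{12}(g,h)$, and the sextic $F_{12}(z, g, h, r, s; x)$, so that $\mathscr{L}_{12} \subset \mathbb{A}^5$ is explicitly cut out by $z^2 = \lambda_{12}$ and $r^2 - 12 s^2 = q_{12}$. I would then construct a birational map $\mathbb{A}^3_{a,b,c} \dashrightarrow \mathscr{L}_{12}$ in two stages. First, parametrize the surface $\{z^2 = \lambda_{12}(g,h)\} \subset \mathbb{A}^3$ with two parameters, say $a$ and $b$, by exhibiting a $\Q$-rational section (this is a conic bundle over the $(g,h)$-plane, so its rationality is governed by the existence of such a section). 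Second, over the parametrized surface, solve $r^2 - 12 s^2 = q_{12}(g(a,b),h(a,b))$: the key step is to locate a $\Q(a,b)$-rational point on this conic, from which a linear parametrization by $c$ follows by the standard conic trick.

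Given such a parametrization, substitution into $F_{12}$ yields a sextic in $x$ with coefficients in $\Q(a,b,c)$, which must be shown to equal $\Nm_{L/K}\poly(x)$. To check this identity I would compute $\prod_{i=1}^{3}\poly(x)|_{r = \rho_i}$ symbolically by expressing the product in the elementary symmetric functions of the roots of $\xi$ (read off from its coefficients) and match against the substituted $F_{12}$ coefficient by coefficient in $x$. The shape of $\xi(r) = r^3 - 3(a^2 - 3 b^2) r + 2(a^2 - 3 b^2)$ is natural in view of $\sqrt{12} = 2\sqrt 3$: the quantity $a^2 - 3 b^2 = \Nm_{\Q(\sqrt 3)/\Q}(a + b\sqrt 3)$ controls the conic-fiber norm form, and the three embeddings of $L$ into $\overbar k$ account for the three quadratic factors of $f_{12}$.

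Once the norm-form identity is established, genericity of $f_{12}(a,b,c;x)$ follows from \cref{thm:models}\ref{iii:gen-fam}: the parametrization $\mathbb{A}^3 \dashrightarrow \mathscr{L}_{12}$ is dominant by construction, so the pullback of the generic family of \cref{thm:models} remains generic in the sense of \Cref{rem:genericity}. The claim that RM descends to $k$ whenever $\End_{\overbar k}(J)$ is commutative is standard: the Galois action on a commutative endomorphism algebra commuting with the Rosati involution fixes $\calO_{12} \subset \End_{\overbar k}(J)$ pointwise, so the embedding $\calO_{12} \hookrightarrow \End_{\overbar k}(J)$ is Galois-equivariant and hence descends to $\End_k(J)$.

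The main obstacle is the explicit parametrization step: finding a $\Q$-rational section of $\{z^2 = \lambda_{12}(g,h)\}$ and, more delicately, a $\Q(a,b)$-point on the conic $r^2 - 12 s^2 = q_{12}$ whose dependence on $(a,b)$ is low enough degree that the resulting $f_{12}$ has a clean shape. Once such sections are located, presumably by searching low-height points on the Elkies--Kumar model and then symbolically manipulating the resulting conic, the rest of the proof reduces to routine verification in a computer algebra system.
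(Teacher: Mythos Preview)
Your overall strategy---parametrize the Mestre conic bundle, substitute into the associated cubic, and verify the resulting sextic---is the same as the paper's. The paper streamlines your two-stage parametrization into one: rather than working with $\mathscr{L}_{12} \subset \mathbb{A}^5$ and first parametrizing the double cover $z^2 = \lambda_{12}$, it passes immediately to the Elkies--Kumar $(m,n)$-coordinates on $Y_-(12)$ (which is rational) and uses the form $p_{12}(m,n) = -27m^2 + n^2 + 27$ from \cref{tab:pD}. The relevant threefold is then the single hypersurface $\mathscr{L}_{12}' : r^2 - 12s^2 = p_{12}$ in $\mathbb{A}^4_{m,n,r,s}$, and the parametrization $(a,b,c) \mapsto (m,n,r,s)$ is recorded in the electronic data; the sextic is then matched against $\widetilde{R}_{12}'(\nu_0,\nu_1,\nu_2)$ via computer algebra and \cref{prop:conic-cubic}. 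Your route would work but reinvents the rational parametrization of $Y_-(12)$ that Elkies--Kumar already supply.

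Two slips to flag. First, $\{z^2 = \lambda_{12}(g,h)\}$ is a double cover of the $(g,h)$-plane, not a conic bundle over it, and its rationality is not a matter of ``exhibiting a section'' of that map (a nontrivial double cover has no section); rather it is a theorem of van der Geer, made explicit by Elkies--Kumar. Second, your descent argument is not right as stated: commutativity of $\End_{\overbar k}(J)$ does \emph{not} by itself force Galois to fix $\calO_{12}$ pointwise---there are abelian surfaces over $k$ with $\End_{\overbar k}(J) = \calO_D$ on which Galois swaps $\sqrt D$ and $-\sqrt D$. The correct input is that $C$ arises from a $k$-point of $Y_-(12)$ (via your parametrization), and one then invokes \cite[Proposition~2.1]{CM:RM5} as the paper does.
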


\begin{cor} [A generic RM 17 family]
  \label{thm:RM17}
  Let $k$ be a field of characteristic $0$. For each $a,b \in k$ let
  \begin{flalign*}
    \poly(x)  &= (a^2 - 8 a b + 4 a - 9 b^2 - 6 b + 3) x^3 + 3 (7 a b - 3 a + 7 b^2 + 4 b - 3) x^2 + 4( a^2 - 7 a b   & \\
              &\quad + 3 a - 4 b^2) x + 4( 3 a b - a + b^2 - b),                                                      & \\
    \intertext{and}
    \polytwo(x) &= 4(a^2 b + 5 a b^2 - 7 a b + 2 a - 6 b^2 + 2 b) x^3 + 4 (6 a^2 b - 2 a^2 - 12 a b^2 + 11 a b - 3 a    & \\
              &\quad+ 14 b^2 - 6 b) x^2 + ( 4a^3 - 34 a^2 b + 16 a^2 + 38 a b^2 - 43 a b + 9 a - 43 b^2 + 36 b - 9) x & \\
              &\quad+ 12 a^2 b  - 4 a^2 - 10 a b^2 + 14 a b - 4 a + 11 b^2 - 14 b + 3.                                &
  \end{flalign*}
  If the sextic polynomial $f_{17}(a,b;x) = \poly(x)\polytwo(x)$ has no repeated roots, the Jacobian $J$ of the genus $2$ curve $C/k$ given by the Weierstrass equation $C: y^2 = f_{17}(a,b;x)$ has RM $17$ over $\bar k$, and if $\End_{\overbar k}(J)$ is abelian then the RM is defined over $k$. Moreover, this is a generic family of genus $2$ curves with RM $17$ over $\Q$.
\end{cor}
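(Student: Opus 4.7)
The plan is to derive \Cref{thm:RM17} from \Cref{thm:models} by producing a two-parameter rational section of the conic bundle $\mathscr{L}_{17}$ and substituting it into the sextic $F_{17}(z,g,h,r,s;x)$. First I would read off $F_{17}$, $\lambda_{17}$, and $q_{17}$ from the electronic data so that
\[
\mathscr{L}_{17} : \quad z^2 = \lambda_{17}(g,h), \qquad r^2 - 17\,s^2 = q_{17}(g,h)
\]
is explicit in $\mathbb{A}^5$. By \Cref{thm:models}\ref{iii:gen-fam}, to obtain a generic family of RM 17 curves it suffices to construct a rational map $\sigma \colon \mathbb{A}^2_{(a,b)} \dashrightarrow \mathscr{L}_{17}$ whose composition with the forgetful projection $\mathscr{L}_{17} \to \mathbb{A}^2_{(g,h)}$ is dominant; substituting $\sigma$ into $F_{17}$ then yields a sextic in $x$ whose Weierstrass model realises every $\bar k$-isomorphism class of RM 17 curves on a Zariski-dense set.

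The principal obstacle, and the content specific to $D=17$, is the construction of $\sigma$. Over $\Q(g,h)$ one must simultaneously split the double cover $z^2 = \lambda_{17}$ and the conic $r^2 - 17 s^2 = q_{17}$. That a two-parameter (rather than three-parameter, as produced in general in \Cref{sec:3-param-models}) family exists for $D = 17$ is precisely the assertion that $\mathscr{L}_{17} \to \mathbb{A}^2_{(g,h)}$ admits a $\Q$-rational section on a dense open set, i.e.\ that the associated Brauer obstructions vanish. I would construct $\sigma$ by factoring $\lambda_{17}$ and $q_{17}$, exhibiting a $\Q$-rational point on the fibre product (plausibly a cusp of the Hilbert modular surface of discriminant $17$, or a rational component of a special divisor on it), and then sweeping out $\sigma$ via the usual secant parametrisation of the conic through that point. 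The $3{+}3$ factorisation $f_{17} = \poly \cdot \polytwo$ asserted in the statement should fall out of the geometry of this section: the chosen $(a,b)$-surface lies on a divisor of $\mathscr{L}_{17}$ where the sextic splits into two cubics, which corresponds to $\Jac(C)$ acquiring a rational $2$-torsion class from an unordered pair of Weierstrass triples.

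With $\sigma$ in hand, substitution into $F_{17}$ followed by a suitable $\GL_2(\Q)$-change of variable on $x$ and a scaling of $y$ is a routine computer-algebra check against $\poly(x)\polytwo(x)$; the non-vanishing of the discriminant of $f_{17}$ cuts out the singular locus of the family. The RM 17 claim over $\bar k$ and genericity then follow directly from \Cref{thm:models}\ref{iii:gen-fam} together with dominance of $(a,b) \mapsto (g,h)$. For the field-of-definition statement, \Cref{thm:models} produces an $\calO_{17}$-action on $\Jac(C)$ defined over $k$ at every $k$-rational point of $\mathscr{L}_{17}$; specialising via $\sigma(a,b)$ with $a,b \in k$, this embedding persists over $k$ provided that no Galois-twist is introduced by the specialisation. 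That is ruled out by the abelianness hypothesis on $\End_{\bar k}(\Jac(C))$: if the endomorphism ring is commutative and contains $\calO_{17}$, the image of $\calO_{17}$ is the unique $\calO_{17}$-subring, hence Galois-stable, so the $k$-rational RM embedding coming from \Cref{thm:models} is preserved.
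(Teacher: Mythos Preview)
Your overall strategy---parametrise $\mathscr{L}_{17}$ rationally and substitute into $F_{17}$---matches the paper, but you miss the two facts that make $D=17$ special and hence make a \emph{two}-parameter family possible. First, $q_{17}=1$ (this is \cref{thm:qD=1}, using $17\equiv 1\pmod 8$), so the conic $r^2-17s^2=q_{17}$ has the obvious constant section $(r,s)=(1,0)$ over all of $Y_-(17)$; no factoring or secant sweep is needed. Second, the remaining equation $z^2=\lambda_{17}$ is just the Elkies--Kumar model of $Y_-(17)$, which is a rational surface with explicit coordinates $(m,n)$. The paper therefore works in $(m,n)$-coordinates from the start, uses $p_{17}=1$, and parametrises the Mestre conic $X^2-17Y^2-Z^2=0$ directly over $\Q(m,n)$; substituting into the transformed Mestre cubic gives $f_{17}$. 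Your plan to ``factor $\lambda_{17}$ and $q_{17}$ and exhibit a rational point on the fibre product'' never comes to grips with the double cover $z^2=\lambda_{17}$: a secant parametrisation of the conic alone does not rationalise $Y_-(17)$, and you need both ingredients simultaneously.

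Your account of the $3+3$ splitting is also off. The factorisation $f_{17}=\poly\cdot\polytwo$ is not a feature of a special divisor on $\mathscr{L}_{17}$; by \cref{lemma:1-mod-8-factor} \emph{every} genus~2 curve with RM $D\equiv 1\pmod 8$ defined over $k$ has a Weierstrass sextic that factors over $k$ into two cubics, coming from the splitting $(2)=\mathfrak{p}\bar{\mathfrak{p}}$ in $\calO_D$ and the resulting Galois-stable decomposition $J[2]=J[\mathfrak{p}]\oplus J[\bar{\mathfrak{p}}]$. Finally, for the field-of-definition claim the paper simply cites \cite[Proposition~2.1]{CM:RM5}; your uniqueness argument for the $\calO_{17}$-subring is in the right spirit but would need more care (one must know the RM is defined over some extension and then descend).
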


\begin{rem}
  \label{rmk:when-isom-abc}
  In our proof of Corollaries~\ref{thm:RM12} and \ref{thm:RM17} we obtain a map from the respective parameter spaces to the $(g,h)$-plane. This allows us to give simple conditions (exploiting \Cref{thm:models}) for when pairs of parameters correspond to (geometrically) isomorphic curves (see~\Cref{prop:when-isom}).
\end{rem}

\subsection{\texorpdfstring{Parametr{\iz}ing genus $2$ curves with RM $D$}{Parametrising genus 2 curves with RM D}}
Our approach to proving \cref{thm:models} is via moduli.  Let $D > 0$ be a fundamental discriminant. The Hilbert modular surface $Y_-(D)$ of discriminant $D$ provides a (coarse) moduli space for genus 2 curves with RM $D$ together with the RM $D$ action.
Forgetting the choice of RM $D$ real{\iz}es $Y_-(D)$ as a double cover of the Humbert surface of discriminant $D$, which we denote $\mathcal H_D$. (For precise definitions, see~\Cref{sec:real-mult-hilb}).

For each fundamental discriminant $D < 100$ the Humbert surface $\mathcal{H}_D$ is $\Q$-birational to $\mathbb{A}^2$ and in each of these cases Elkies and Kumar \cite{Elkies-Kumar} computed explicit rational parametrisations $\Q(\mathcal{H}_D) \cong \Q(g,h)$ together with rational maps $\mathbb{A}^2 \dashrightarrow \mathcal{M}_2$ real{\iz}ing the moduli interpretation of $\mathcal{H}_D$. Moreover, they give explicit $\Q$-birational models for $Y_-(D)$ in the form
\begin{equation} \label{eq:EK-model}
 z^2 = \lambda_D,
\end{equation}
for each fundamental discriminant $D < 100$, where $\lambda_D \in \Z[g,h] \subset \Q(\mathcal{H}_D)$ is a squarefree polynomial. 

In fact, the Hilbert modular surface $Y_-(D)$ is itself rational if and only if $D = 5, 8, 12, 13, 17$ (see \cite[Theorem~VII.3.4]{vdG}).  Elkies and Kumar also give rational parametrisations $\Q(Y_-(D)) \cong \Q(m,n)$ in these cases, together with the rational maps $(m,n) \mapsto (g,h)$ induced by forgetful morphisms $Y_-(D) \to \mathcal H_D$.

By construction, a $k$-rational point $(z,g,h) \in Y_{-}(D)$ corresponds to genus 2 curve $C/\overbar{k}$ with RM $D$. Indeed, if $\Aut(C) \cong C_2$, then $C$ admits a model over $k$ if and only if the \emph{Mestre obstruction} vanishes, i.e., if the \emph{Mestre conic} associated to the image of $(g,h)$ in $\mathcal{M}_2$ has a $k$-rational point (for more details see~\Cref{sec:mestre-conic}). We write $L_D$ for the Mestre conic associated to the generic point of $\mathcal{H}_D$.

In general, the conic $L_D$ has quite complicated coefficients in $g$ and $h$, and there is no obvious simple criterion for the Mestre obstruction to vanish. An argument of Poonen (see \Cref{prop:pD-is-a-norm}) shows that $L_D$ is isomorphic over $\Q(Y_{-}(D))$ to a conic of the form
\begin{equation}
  \label{eq:pDform}
  \widetilde{{L}}_D \colon X^2 - D Y^2 - q_D Z^2 = 0
\end{equation}
for some rational function $q_D \in \Q(\mathcal{H}_D) \subset \Q(Y_{-}(D))$.  The proof of this result is 
non-constructive, and to actually find such a function $q_D$ (as well as the
requisite transformations) is not at all easy. The main step in the proof of \Cref{thm:models} is to find such transformations, and in particular we have the following theorem.

\begin{thm}
  \label{thm:pDs}
  For $D \in \{5, 8, 12, 13, 17, 21, 24, 28, 29, 33, 37, 44, 53, 61 \}$, the Mestre conic $L_D$ is isomorphic to
$X^2 - D Y^2 - q_D Z^2$ where $q_D \in \Q(g,h)$ is the polynomial of Elkies--Kumar coordinates $(z,g,h)$ given in \cref{tab:qD}.  

Furthermore, for the $5$ fundamental discriminants 
$D \in \{ 5, 8, 12, 13, 17 \}$ such that
$Y_-(D)$ is birational to $\mathbb{A}^2$, the Mestre conic $L_D$ is isomorphic to
$X^2 - D Y^2 - p_D Z^2$ where $p_D \in \Q(m,n)$ is the polynomial in Elkies--Kumar coordinates $(m,n)$ given in
\cref{tab:pD}.
\end{thm}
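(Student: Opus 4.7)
The proof is essentially computational, building on the existence statement in Proposition~\ref{prop:pD-is-a-norm}, which guarantees that some $q_D \in \Q(\mathcal{H}_D)$ of the required form exists: the plan is to find a simple representative and verify it. I would proceed in three steps. (a) Produce an initial candidate $q$ by diagonalizing the Mestre conic $L_D$ (constructed from the Igusa--Clebsch invariants of the Elkies--Kumar moduli map $\mathbb{A}^2 \dashrightarrow \mathcal{M}_2$) over the function field $\Q(Y_-(D)) = \Q(g,h)[z]/(z^2-\lambda_D)$. (b) Simplify $q$ to the polynomial $q_D$ listed in~\cref{tab:qD} by applying the conic-bundle minimization algorithm that is the technical heart of the paper. (c) Verify the answer by assembling an explicit change of variables over $\Q(Y_-(D))$ between $L_D$ and $\widetilde{L}_D$.

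For step (a), completing the square twice diagonalizes $L_D$ over $\Q(\mathcal{H}_D)$ as $\alpha X^2 + \beta Y^2 + \gamma Z^2$. Rescaling variables and using the proof of Proposition~\ref{prop:pD-is-a-norm}, one can arrange that $\beta/\alpha$ agrees with $-D$ modulo squares in $\Q(Y_-(D))^*$ (this is where the existence of $\sqrt{\lambda_D}$ is used), leaving $\gamma/\alpha$ as an initial but unwieldy rational-function representative of $q_D$.

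Step (b) is the main obstacle and the place where the new input of the paper enters: viewing $L_D$ as a conic bundle over $\mathbb{P}^2$ in the coordinates $(g,h)$, the minimization algorithm removes square factors from the discriminant divisor prime by prime, stripping redundant ramification at each irreducible curve in $\mathbb{A}^2$ over which the conic degenerates, and outputs the compact polynomial representative of $q_D$ in~\cref{tab:qD}. Step (c) is then a finite verification: each rescaling of coordinates and each square-factor clearance in steps (a) and (b) is itself an explicit linear change of variables, so their composition yields a $3 \times 3$ invertible matrix $M$ over $\Q(Y_-(D))$ with $M^T Q_{L_D}\, M = c\cdot Q_{\widetilde{L}_D}$ for some scalar $c$; this matrix and $q_D$ are recorded in the electronic data~\cite{electronic}, and the equality is checked by direct symbolic expansion. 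Finally, for the five rational cases $D \in \{5,8,12,13,17\}$, I would pull $q_D$ back along the Elkies--Kumar parametrization $(m,n)\mapsto (g,h)$ and re-apply the minimization algorithm to obtain $p_D \in \Q[m,n]$ as in~\cref{tab:pD}; the second assertion of the theorem then follows by functoriality from the first.
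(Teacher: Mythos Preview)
Your outline reverses the order of operations relative to the paper, and this creates a genuine gap in step~(a). You propose to diagonalize $L_D$ first and then ``use the proof of Proposition~\ref{prop:pD-is-a-norm}'' to arrange that $\beta/\alpha \equiv -D$ modulo squares in $\Q(Y_-(D))^*$. But that proposition is explicitly non-constructive: its proof shows only that the Mestre conic acquires a rational point over $k(\sqrt{\lambda D})$, via a moduli-theoretic section of a $\mathbb{G}_m$-torsor. It does not hand you the change of basis that realizes the coefficient $-D$. So after diagonalizing, you are left with three complicated functions $\alpha,\beta,\gamma \in \Q(g,h)$ and no mechanism to isolate the $-D$ factor; the step ``rescale variables to make $\beta/\alpha$ equal to $-D$ up to a square'' is precisely the hard part of the theorem, not a preliminary normalization.

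The paper's actual route is the opposite: it runs the minimization search (\cref{alg:main}) on the full, non-diagonal (IC- or RM-simplified) Mestre conic over $\Z[g,h]$, reducing the degree of the discriminant by a sequence of blow-ups, and only \emph{afterwards} diagonalizes. The point---made explicitly in \cref{rem:redmin} and in \Cref{sec:qD}---is that $q_D$ is typically not a factor of the original discriminant at all; it emerges only when one diagonalizes the already-minimized conic. In other words, the minimization algorithm is what \emph{discovers} the form $X^2 - D\lambda_D Y^2 - q_D Z^2$; it is not merely a simplification applied to a pre-existing candidate $q$. Your step~(b), which treats the algorithm as a tool for cleaning up a single rational function $q$, understates its role. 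The verification step~(c) and the treatment of $p_D$ (convert to $(m,n)$-coordinates and rerun the algorithm) are in line with the paper.
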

  
\begingroup
\renewcommand*{\arraystretch}{1.333}
\begin{table}
  \begin{tabular}{r|C{130mm}}
    $D$ & $q_D$                                                                                    \\
    \hline
    5   & $-6(10g+3)(15g+2)$                                                                        \\
    8   & $4g+4h-7$                                                                                \\
    12  & $-(h - 1) (3 h^{3} + 9 h^{2} - 27 g - 4 h - 8)$                                          \\
    13  & $-100 g^{2} + 385 g h - 48 h^{2} + 194 g + 168 h - 108$                                   \\
    17  & $1$                                                                                      \\
    21  & $18g^2 - 12gh - 12h^2 - 14$                                                              \\
    24  & $12gh^2 -3g^2   - 2h^2 + 3$                                                              \\
    28  & $- 2 ( 19 g^{2} + 35 h^{2} + 84 h + 28 )$                                                \\
    29  & $- 6 g^{2} - 6 g h + 65 g - 16 h^{2} - 156 h + 4 $                                       \\
    33  & $1$                                                                                      \\
    37  & $g^{2} + 15 g h + 20 g - 27 h^{2} + 2 h - 11 $                                           \\
    44  & $( g h + h - 1 ) ( 5 g^{3} h + 9 g^{2} h + 6 g^{2} - 4 g h + 18 g - 8 h + 19 )$          \\
    53  & $-(25 h^2 + 42 h + 24)g^2 - (h + 1)^2(26 h + 7)g - 11(h + 1)^4$                          \\
    61  & $-3(3h^2 + 7h - 1)g^2 + 2(9h^3 + 12h^2 - 10h - 1)g - 9h^{4} - 3h^{3} + 8h^{2} + 8h - 20$ \\
  \end{tabular}
  \vspace{2mm}
  \caption{Rational functions $q_D \in \Q(\mathcal{H}_D)$ such that the Mestre conic $L_D$ is isomorphic over $\Q(Y_{-}(D))$ to $X^2 - DY^2 - q_DZ^2 = 0$.}
 \label{tab:qD}
\end{table}

\begin{table}
  \begin{tabular}{r|C{130mm}}
    $D$ & $p_D$                                                         \\
    \hline
    5   &$m^2 - 5n^2 - 5$                                               \\
    8   & $-(m+1)$                                                         \\
    12  & $-27m^2 + n^2 + 27$                                           \\
    13  & $1803m^2 - 72mn + n^2 + 3168m - 1440n - 768$                  \\
    17  & 1                                                             \\
  \end{tabular}
  \vspace{2mm}
  \caption{Rational functions $p_D \in \Q(m,n)$ such that the Mestre conic $L_D$ is isomorphic to $X^2 - DY^2 - p_DZ^2 = 0$ when $Y_-(D)$ is rational.} 
 \label{tab:pD}
\end{table}
\endgroup

\begin{rem}
  It is natural to wonder about the significance of the curves that $q_D$ and $p_D$ cut out in $Y_-(D)$.  We make some remarks about this in \Cref{sec:qD}, including a conjecture for $q_{40}$, but this remains quite mysterious to us.
\end{rem}

The first part of \cref{thm:pDs}, together with Mestre's method of constructing a genus 2 curve from a rational point on $L_D$ yields \cref{thm:models}, which we recall gives models in 5 parameters with 2 relations. Using the latter part of this theorem for $D = 5, 8, 12, 13, 17$ provides models in 4 parameters with 1 relation. In fact \cref{thm:pDs} implies that for each $D \leq 17$, the Mestre conic bundle $\mathscr{L}_D \to Y_-(D)$ is a rational threefold. Parametrising $\mathscr{L}_D$ then allows us to give models in 3 parameters with no relations.  

Conversely, the conic bundle $\mathscr{L}_D$ is never (uni-)rational when $D > 17$ since the Hilbert modular surface $Y_{-}(D)$ is not rational; see~\cite[Theorem~VII.3.4]{vdG}. Note that, since $q_D$ is a rational function on $\mathcal{H}_D$, equation \eqref{eq:pDform} defines a conic over $\Q(\mathcal{H}_D)$. This conic spreads out to a conic bundle $\mathscr{L}_D^{\mathcal{H}} \to \mathcal{H}_D$ which, \textit{a priori}, may be rational. In such cases it is possible to give a $4$-parameter family $y^2 = f_D(a,b,c,z;x)$ subject to a single relation $z^2 = \Lambda(a,b,c)$ for some polynomial $\Lambda(a,b,c) \in \Z[a,b,c]$.

\begin{question}
  Does there exist a fundamental discriminant $D$ such that ${\mathscr{L}}^{\mathcal H}_D$ is not geometrically (uni-)rational but $\mathcal{H}_D$ is a rational surface?
\end{question}

When $D = 21$, $28$, $29$, $33$, $37$, $44$, $53$, and $61$ we exhibit explicit parametrisations of the threefolds ${\mathscr{L}}^{\mathcal H}_D$, which may be found in~\cite{electronic}. Moreover, we give a parametrisation of $\mathscr{L}_{24}^{\mathcal{H}}$ over $\Q(\sqrt{-2})$.

Note that ${\mathscr{L}}^{\mathcal H}_D$ is not birational to the Mestre conic bundle over $\mathcal{H}_D$. Rather, \cref{prop:pD-is-a-norm} implies that the Mestre conic over $\Q(\mathcal{H}_D)$ is isomorphic to a conic of the form $X^2 - D \lambda_D Y^2 - q_D Z^2 = 0$.  It is not clear if the Mestre conic bundle over $\mathcal{H}_D$ is ever rational.

As mentioned above, when $D=17$ we may remove one more parameter without losing genericity. This is because the Mestre obstruction vanishes whenever $D \equiv 1 \pmod 8$.

\begin{thm}
  \label{thm:qD=1}
  If $D \equiv 1 \pmod 8$ is a positive fundamental discriminant,
  then the Mestre conic $L_D$ is isomorphic over $\Q(Y_{-}(D))$ to a conic of the form $X^2 - D Y^2 - Z^2$, i.e., we can take $q_D = 1$.
\end{thm}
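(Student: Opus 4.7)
By \cref{prop:pD-is-a-norm}, the Mestre conic $L_D$ is isomorphic over $\Q(Y_-(D))$ to $X^2 - DY^2 - q_D Z^2 = 0$ for some $q_D \in \Q(\mathcal{H}_D)$, so the desired conclusion is equivalent to the vanishing of the quaternion-algebra class $(D, q_D) \in \mathrm{Br}(\Q(Y_-(D)))[2]$, or equivalently to $q_D$ being a norm from $\Q(Y_-(D))(\sqrt{D})$. My plan is to establish this vanishing via a residue analysis on the surface $Y_-(D)$ over $\Q$, exploiting that $D \equiv 1 \pmod 8$ forces $\sqrt{D} \in \Q_2$.

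The main tool is the Faddeev/purity exact sequence
\[
0 \longrightarrow \mathrm{Br}(Y_-(D))[2] \longrightarrow \mathrm{Br}(\Q(Y_-(D)))[2] \xrightarrow{\;\bigoplus_v \partial_v\;} \bigoplus_{v} H^1(k(v), \Z/2\Z)
\]
on a smooth projective model of $Y_-(D)/\Q$, where $v$ ranges over prime divisors. Because $D$ is a nonzero constant in $\Q(Y_-(D))$, the residue $\partial_v(D, q_D)$ is the class of $D$ in $k(v)^*/k(v)^{*2}$ raised to $\nu_v(q_D) \bmod 2$, which vanishes whenever $\sqrt{D}$ becomes rational on the divisor $v$ or $\nu_v(q_D)$ is even. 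The first step is to show that every odd-multiplicity component of $\mathrm{div}(q_D)$ is geometrically reducible over $\Q(\sqrt{D})$, so that each nontrivial residue is killed. Once the residues vanish, the remaining contribution from $\mathrm{Br}(Y_-(D))[2]$ can be detected at any rational point $P \in Y_-(D)(\Q)$ where $q_D$ and the conic are smooth, by computing $(D, q_D(P)) \in \mathrm{Br}(\Q)[2]$; this Hilbert symbol vanishes at $\infty$ (as $D > 0$) and at $2$ (as $D \equiv 1 \pmod 8$ yields $\sqrt D \in \Q_2$), and the remaining odd-prime symbols can be forced to vanish by a prudent choice of $P$.

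The principal obstacle is controlling the odd-multiplicity components of $\mathrm{div}(q_D)$ in a case-free way, since the explicit form of $q_D$ depends on the intricate Elkies--Kumar parametrisation of $\mathcal{H}_D$. To bypass this, an alternative route is to exhibit an explicit trivialisation of $(D, q_D)$ using the 2-adic splitting $\calO_D \otimes \Z_2 \cong \Z_2 \times \Z_2$ afforded by $D \equiv 1 \pmod 8$: this decomposes the universal $\calO_D$-polarised Tate module 2-adically and should, on unwinding, produce a $\Q(Y_-(D))$-point of $L_D$ arising directly from the induced $\calO_D$-module structure on the principally polarised abelian surface, thereby certifying $q_D = 1$ by hand and sidestepping the global Brauer computation entirely.
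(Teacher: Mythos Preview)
Your proposal is a plan rather than a proof, and both branches have genuine gaps.

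For the residue approach, you yourself identify the obstacle: you have no uniform description of $\operatorname{div}(q_D)$ (equivalently, of the degeneration locus of the Mestre conic bundle) valid for all $D\equiv 1\pmod 8$. The polynomial $q_D$ is only determined up to norms and depends on the Elkies--Kumar parametrisation, which is computed case by case for $D<100$; there is no general formula to feed into the purity sequence. Even granting that the residues vanish, your final step is also incomplete: evaluating an unramified class at a single rational point $P$ only computes its image in $\mathrm{Br}(\Q)$, and you would need to know that $\mathrm{Br}(Y_-(D))\to\mathrm{Br}(\Q)$ is injective (equivalently that the transcendental Brauer group over $\Q$ vanishes), which you have not established and which is not obvious for Hilbert modular surfaces. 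Saying one can ``force'' the odd-prime Hilbert symbols to vanish by a prudent choice of $P$ does not address this.

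Your second paragraph gestures in the right direction but stops short of the key idea. The paper makes this concrete as follows: since $2$ splits in $\calO_D$ as $\mathfrak p\bar{\mathfrak p}$, one has a Galois-stable decomposition $J[2]=J[\mathfrak p]\oplus J[\bar{\mathfrak p}]$, and the Weil pairing is shown to be trivial between the summands and nondegenerate on each. Translating via the standard description of $2$-torsion on a genus~$2$ Jacobian in terms of pairs of Weierstrass points, the three nonzero elements of $J[\mathfrak p]$ are exactly $T_{1,2},T_{1,3},T_{2,3}$ for some Galois-stable triple $\{x_1,x_2,x_3\}$ of Weierstrass points. Hence the Weierstrass sextic factors over the base field as a product of two cubics, giving a $k$-rational effective divisor of degree~$3$ on $L_D\cap M_D$; subtracting a canonical divisor of $L_D$ yields a $k$-rational point on $L_D$. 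Applying this at the generic point of $Y_-(D)$ gives the theorem. Your phrase ``should, on unwinding, produce a $\Q(Y_-(D))$-point'' is exactly what needs to be done, and the missing mechanism is this $2$-torsion/Weierstrass-point argument rather than anything $2$-adic about the Tate module.
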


In \Cref{sec:theoretical-results} we prove \cref{thm:qD=1} by studying the RM action on the $2$-torsion on a Jacobian with RM $D$. When $D \equiv 1 \pmod 8$, the prime $2$ splits in $\Q(\sqrt D)$, and using this observation we prove that a genus 2 curve with such RM has a Weierstrass model of the form $y^2 = \poly(x)\polytwo(x)$ for some cubic polynomials $\poly(x), \polytwo(x) \in k[x]$ (see \cref{lemma:1-mod-8-factor}).  These facts also allow us to get a relatively nice model in the case $D = 33$.

\cref{thm:qD=1} implies that whenever $D \equiv 1 \pmod 8$, the image of the set of rational points on $Y_-(D)$ under the double covering map $Y_-(D) \to \mathcal H_D$ generically parametr{\iz}es genus 2 curves with RM $D$ over $k$ (up to twist).  However, \cref{thm:qD=1} by itself is not sufficient to construct generic models.

\subsection{An outline of the proof of \texorpdfstring{\Cref{thm:models}}{Theorem 1.1}: Simplifying the Mestre conic}
To prove Theorems~\ref{thm:models} and \ref{thm:pDs} we construct transformations which put the Mestre conic $L_D$ into one of the form \eqref{eq:pDform}.
In \cite{CM:RM5} the first and third authors carried this out using naive methods when $D=5$. However, that approach is not practical for $D > 5$.

The main idea is to employ \emph{minim{\iz}ation}, similar to Tate's algorithm for elliptic curves. We regard a conic $L/\Q(t_1,t_2)$ as the generic fibre of a conic defined over $\Z[t_1,t_2]$. By repeatedly blowing-up singularities of the latter we are often able to \emph{minim{\iz}e} $L$, that is, find an isomorphic conic whose discriminant has minimal degree (viewed as an element of $\Z[t_1,t_2]$).  However $\Z[t_1, t_2]$ is not a PID so, unlike when $L$ is defined over $\Q$, this process turns out to be quite delicate as:
\begin{enumerate}
\item
  most blow-ups introduce other bad primes, which may be worse, and
\item
  this process is extremely sensitive to the order in which these blow-ups are made.
\end{enumerate}
For most $D < 30$, with a lot of patience and various tricks, one can carry out this process ``by hand'' 
(i.e., human-directed calculations in \texttt{Magma}~\cite{MAGMA}) to find $q_D$, however the
complexity grows quickly with $D$. 

In \Cref{sec:algorithms} we develop and implement an algorithm which automates this minim{\iz}ation
process.  By running this algorithm we find the transformations necessary to prove \cref{thm:pDs}. Applying these transformations we find the corresponding generic models (see~\Cref{sec:proof-main-results}).  
Our \texttt{Magma} implementation has been made available through the GitHub repository~\cite{electronic}.

The minim{\iz}ation approach which we employ is likely well known to experts, but the automation aspect seems to
be novel (and can be adapted to much more general situations than conics
defined over 2-variable function fields).  
To be clear, we are not able to give an effective algorithm to minim{\iz}e a conic
in the sense of provably terminating with a solution.  Rather, we give an
algorithm to search through a tree of sequences of blow-ups, which
involves carefully scoring and pruning paths. This is necessary since the whole search tree is massive
(it is infinite) and individual blow-up calculations can get very slow.
Runtimes for our algorithm can be found in \cref{tab:runtimes}.
It is not clear that increasing computational resources would allow us to treat more discriminants.

\begin{rem}
  The preliminary step in our approach has applications to reconstructing general genus 2 curves (i.e., without an RM condition) from their moduli. Mestre's original conic is given in a simple form in terms of Clebsch invariants, however, in practice, one often begins with Igusa--Clebsch invariants and writes the Mestre conic in terms of Igusa--Clebsch invariants.  The resulting coefficients can be quite large even when the Igusa--Clebsch invariants are small. In \Cref{sec:ICmestre} we present simplified forms of Mestre's conic in terms of Igusa--Clebsch invariants.  In addition to simplifying our minim{\iz}ation process, this can also be used to construct genus 2 curves from Igusa--Clebsch invariants with smaller coefficients than the standard procedure.
\end{rem}

\subsection{Further remarks}

First, we say a little more about previous works on families of genus 2 curves with RM.
In \cite{mestre:families}, Mestre constructed 2-parameter families with RM 5 and RM 8. These families, however, do not contain all twist classes over $\Q$.  Brumer (see \cite{brumer:rank,hashimoto}) constructed a 3-parameter family with RM 5 --- it is not known if Brumer's family covers all twist classes over $\Q$.  
Bending \cite{Bending_thesis,Bending_paper} gave a versal family with RM 8 determined (up to quadratic twist) by three parameters $A,P,Q \in k$ (that is, every genus $2$ curve with RM 8 arises in this way, but distinct parameters $(A,P,Q)$ may correspond to $\bar k$-isomorphic curves).  
These constructions, however, are quite specific to discriminants 5 and 8.
Moreover, it is not clear from these constructions which parameters yield
 isomorphic curves, either over $k$ or $\overbar k$.

To our knowledge, aside from being generic models, ours are the first explicit models of any families of genus 2 curves with RM $D$ beyond the cases of $D = 5, 8$, and the recent work of \cite{Bruin-Flynn-Shnidman} for $D=12$ with full $\sqrt 3$-level structure.
See \cref{sec:relat-prev-work} for comparisons between our models and existing families.

In \cite{Elkies-Kumar}, for many fundamental discriminants $D < 100$, Elkies and Kumar identified one or more curves on $Y_-(D)$ with no Brauer obstruction, which typically correspond to 1-parameter families of genus 2 curves over $\Q$ with RM $D$.  (Note that modular curves on $Y_-(D)$ need not correspond to families of genus 2 curves where the RM is defined over $\Q$.)  However, no explicit models were given.

One application of such explicit models is to estimate counts of genus 2 curves with RM $D$ by discriminant or conductor, which translates into estimates of counts of weight 2 newforms with rationality field $\Q(\sqrt D)$.  In \cite{CM:conjectures}, we used existing models to prove lower bounds for such counts for $D = 5, 8$.  Our models here should have similar applications to other discriminants $D$.

\vspace{-1mm}
\subsection*{Acknowledgements}
We thank Noam Elkies, Tom Fisher, and Jef Laga for helpful discussions. We are grateful to Bjorn Poonen for kindly explaining \cref{prop:pD-is-a-norm} and its proof to us, and allowing us to include it here.

The algorithms in the paper have been implemented in \texttt{Magma} and our implementation has been made publicly available through the GitHub repository \cite{electronic}.

AC was supported by the Simons Foundation Collaboration Grant 550031.
SF was supported by the Woolf Fisher and Cambridge Trusts and by Royal Society Research Fellows Enhanced Research Expenses RF{$\backslash$}ERE{$\backslash$}210398.
KM was supported by the Japan Society for the 
Promotion of Science (Invitational Fellowship L22540), and the
Osaka Central Advanced Mathematical Institute (MEXT Joint Usage/Research Center on Mathematics and Theoretical Physics JPMXP0619217849).

This work was also supported by the National Science Foundation under Grant No. DMS-1929284 while the authors were in residence at the Institute for Computational and Experimental Research in Mathematics in Providence, RI, during Collaborate@ICERM ``Rational Genus 2 Curves with Real Multiplication.''  
Calculations were performed using computational resources and services at the Center for Computation and Visual{\iz}ation, Brown University.
We thank ICERM and Brown University for their hospitality, and providing access to their OSCAR Supercomputer.
\vspace{-1mm}

\section{Background}
\subsection{Real multiplication and Hilbert modular surfaces}
\label{sec:real-mult-hilb}
Let $k$ be a field of characteristic $0$, and let $C/k$ be a genus $2$ curve. We equip the Jacobian $J = \Jac(C)$ with the canonical principal polarisation and for each endomorphism $\psi \in \End_{\overbar k}(J)$ we write $\psi^\dagger$ for its image under the Rosati involution, that is $\psi^\dagger = \zeta^{-1} \circ \hat{\psi} \circ \zeta$ where $\zeta$ is the principal polarisation on $J$ and $\hat{\psi}$ is the dual of $\psi$. Denote by $\End_{\overbar k}^\dagger(J) \subset \End_{\overbar k}(J)$ the subalgebra of endomorphisms fixed by the Rosati involution, i.e., for which $\psi = \psi^\dagger$.

Let $D$ be a fundamental discriminant. If $\End_{\overbar k}^\dagger(J)$ contains the quadratic order $\calO_D$ of discriminant $D$, we say $J$ (and $C$) 
has real multiplication (RM) by $\calO_D$, and abbreviate this as RM $D$.

Let $\mathcal{M}_2$ and $\mathcal{A}_2$ denote the (coarse) moduli spaces of genus $2$ curves and principally polarised abelian surfaces, respectively. We define for the \emph{Hilbert modular surface $Y_-(D)$ of discriminant $D$} to be the coarse moduli space parametr{\iz}ing pairs $(J/k, \iota)$ where $J/k$ is a genus $2$ Jacobian, and $\iota \colon \calO_D \hookrightarrow \End^\dagger_k(J)$ is an injective ring homomorphism.
(This is birational to the definition in terms of principally polar{\iz}ed abelian surfaces used in 
\cite{vdG} and \cite{Elkies-Kumar}.)

There is a natural forgetful map $Y_{-}(D) \to \mathcal{A}_2$ given functorially by forgetting $\iota$. The (Zariski closure of) the image of this map is known as the \emph{Humbert surface} (of discriminant $D$) and we denote it by $\mathcal{H}_D$.

\begin{rem}
  \label{rem:genericity}
  Let $\mathscr{L}/k$ be a geometrically integral variety. Since a $k$-point on $Y_-(D)$ may not correspond to a genus $2$ curve defined over $k$ we adopt the following convention: A curve $\mathcal{C}/\Q(\mathscr{L})$ is said to be a \emph{generic curve with RM $D$} if there exists a Zariski dense set $U \subset Y_{-}(D)$ such that if $P \in U(k)$ is the moduli of a genus $2$ curve $C/k$, then $P$ lifts to a point $P' \in \mathscr{L}(k)$ such that the specialisation of $\mathcal{C}$ at $P'$ is $\bar k$-isomorphic to $C$.
\end{rem}

\subsection{The Mestre conic and cubic}
\label{sec:mestre-conic}
Let $\mathrm{M}_3(R)$ for the $R$-algebra of $3 \times 3$ matrices with entries in $R$.
If $L/R$ is a conic given by the vanishing of a homogeneous degree $2$ polynomial $Q(X,Y,Z) \in R[X,Y,Z]$ we define the {Gram matrix} of $L$ to be the symmetric matrix $A \in \mathrm{M}_3(R)$ such that
\begin{equation*}
  Q(X,Y,Z) = \mathbf{x}^{\mathrm{T}} A  \mathbf{x}.
\end{equation*}
The {discriminant} of the conic $L$ (and the polynomial $Q(X,Y,Z)$) is defined to be the determinant $\Delta(L) = \Delta(Q) = \det A$.

Let $\mathcal{S} \subset \operatorname{Sym}^6 (\mathbb{P}^1)$ be the moduli space parametrising $6$ \emph{distinct} (unordered) points in $\PP^1$, equipped with the natural action of $\Aut(\PP^1) \cong \PGL_2$. To each point $\boldsymbol{a} = \{a_1, ..., a_6\} \in \mathcal{S}(k)$ we may associate the genus $2$ curve $C_{\boldsymbol{a}}/k$ given by the Weierstrass equation
\begin{equation*}
  C_{\boldsymbol{a}} : y^2 = \prod_{i=1}^6 (x - a_i) .
\end{equation*}

The isomorphism class of the curve $C_{\boldsymbol{a}}$ is invariant under the action of $\PGL_2$ on $\mathcal{S}$. Since all genus $2$ curves are hyperelliptic we therefore have a $\Q$-isomorphism $\mathcal{M}_2 \cong \mathcal{S} \!\sslash\! \PGL_2$. There exist $\PGL_2 \times S_6$-invariants in $\Z[a_1,...,a_6]$, known as the \emph{Igusa--Clebsch invariants}, of degrees $2$, $4$, $6$, and $10$ respectively such that the induced morphism $\mathcal{M}_2 \to \PP(1,2,3,5)$ is $\Q$-birational, and is a $\Q$-isomorphism onto its image (see e.g.,~\cite{IgusaM2}). We identify $\mathcal{M}_2$ with its image in $\PP(1,2,3,5)$.

Consider a $k$-rational point $P = [I_2:I_4:I_6:I_{10}] \in \mathcal{M}_2 \subset \PP(1,2,3,5)$. By construction we may associate a genus $2$ curve $C/k$ to $P$ if and only if the fibre of the morphism $\mathcal{S} \to \mathcal{M}_2$ above $P$ has a $k$-rational point. Generically, this fibre is a $\PGL_2$-torsor, hence for general $P$ it is isomorphic to a conic $L(P)/k$. Mestre~\cite{mestre} proved that if $P$ corresponds to a genus $2$ curve $C/\overbar{k}$ with $\Aut_{\overbar{k}}(C) \cong C_2$ then $L(P)$ is isomorphic to the conic with (symmetric) Gram matrix $(A_{ij})_{i,j=1}^3$ whose upper triangular entries are
\begingroup
\allowdisplaybreaks
\begin{align*}
A_{1,1} &= \frac{-3 I_{2}^{3} - 140 I_{2} I_{4} + 800 I_{6}}{2^6 \cdot 3^4 \cdot 5^6}, \\
A_{1,2} &= \frac{9 I_{2}^{4} + 560 I_{2}^{2} I_{4} + 1600 I_{4}^{2} - 3000 I_{2} I_{6}}{2^7 \cdot 3^7 \cdot 5^8},\\
A_{1,3} &= \frac{-9 I_{2}^{5} - 700 I_{2}^{3} I_{4} + 12400 I_{2} I_{4}^{2} + 3600 I_{2}^{2} I_{6} - 48000 I_{4} I_{6} - 10800000 I_{10}}{2^8 \cdot 3^9 \cdot 5^{10}}\\
A_{2,2} &= A_{1,3}, \\
A_{2,3} &= \frac{3 I_{2}^{6} + 280 I_{2}^{4} I_{4} + 6000 I_{2}^{2} I_{4}^{2} - 1400 I_{2}^{3} I_{6} + 8000 I_{4}^{3} - 52000 I_{2} I_{4} I_{6} + 120000 I_{6}^{2}}{2^9 \cdot 3^{10} \cdot 5^{12}},\\
A_{3,3} &= \frac{-9 I_{2}^{7} - 980 I_{2}^{5} I_{4} - 12800 I_{2}^{3} I_{4}^{2} + 4800 I_{2}^{4} I_{6} + 154000 I_{2} I_{4}^{3} + 162000 I_{2}^{2} I_{4} I_{6}}{2^{10} \cdot 3^{13} \cdot 5^{14}}\\ &\hspace{0.5cm}-\frac{480000 I_{4}^{2} I_{6} + 450000 I_{2} I_{6}^{2} + 8100000 I_{2}^{2} I_{10} + 162000000 I_{4} I_{10}}{2^{10} \cdot 3^{13} \cdot 5^{14}}.
\end{align*}
\endgroup

Mestre also constructs an explicit cubic curve $M(P) \subset \PP^2$ which is defined over $k$. The curve $M(P)$ has the following property.

\begin{prop}[{Mestre~\cite[\S1.5]{mestre}}]
  \label{prop:conic-cubic}
  Let $K/k$ be a field. If there exists a $K$-isomorphism $\psi \colon L(P) \cong \PP^1$ (i.e., if $L(P)$ has a $K$-point), then the double cover of $\PP^1$ ramified over $\psi(M(P) \cap L(P))$ is a genus $2$ curve $C/K$ with moduli $P \in \mathcal{M}_2(K)$.   
\end{prop}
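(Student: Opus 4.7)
The plan is to follow Mestre's classical approach via $\PGL_2$-torsors. When $\Aut_{\overbar{K}}(C) \cong C_2$, the $\PGL_2$-action on the fibre $\phi^{-1}(P)$ of the quotient $\phi \colon \mathcal{S} \to \mathcal{S}\sslash\PGL_2 \cong \mathcal{M}_2$ over $P$ is free, so this fibre is a $\PGL_2$-torsor over $K$. Such torsors are classified by $H^1(K,\PGL_2)$, and their associated twists of $\PP^1$ are precisely the smooth conics over $K$. The defining property of the Mestre conic $L(P)$ is that its class in $H^1(K, \PGL_2)$ matches that of the torsor $\phi^{-1}(P)$; in particular $L(P)$ has a $K$-point if and only if $P$ lies in the image of $\mathcal{S}(K) \to \mathcal{M}_2(K)$, i.e., if and only if there exists a genus $2$ curve over $K$ with moduli $P$.

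Given a $K$-isomorphism $\psi \colon L(P) \xrightarrow{\sim} \PP^1$, the task is to exhibit an unordered $6$-tuple $B \subset \PP^1(K)$ whose associated double cover of $\PP^1$ is a genus $2$ curve with moduli $P$. Mestre's recipe is $B = \psi(M(P) \cap L(P))$, and that this is a $6$-tuple is immediate from Bezout, since a smooth conic meets a cubic in $\PP^2$ in $6$ points. The nontrivial content is that the resulting double cover has the prescribed moduli, which amounts to an identity relating the Igusa--Clebsch invariants of the associated binary sextic to $(I_2, I_4, I_6, I_{10})$.

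Since the entire construction is manifestly $\PGL_2$-equivariant (replacing $\psi$ by $g \circ \psi$ for $g \in \PGL_2(K)$ transforms $B$ equivariantly and leaves the moduli fixed), it suffices to verify this identity at a single generic point. Concretely, one parametrises a universal binary sextic $f(x) = \sum c_i x^i$ of degree $6$, computes its Igusa--Clebsch invariants as explicit polynomials in $c_0, \dots, c_6$, substitutes into the Gram matrix and into the coefficients of $M(P)$, and checks by a symbolic computation that the intersection $M(P) \cap L(P)$, pulled back under the degree $2$ Veronese embedding $\PP^1 \hookrightarrow \PP^2$ that trivialises $L(P)$ over $\overbar K$, recovers the divisor of $f$ up to a $\PGL_2$-action. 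The main obstacle is precisely this invariant-theoretic verification, which is Mestre's original calculation carried out via Clebsch's transvection calculus and which produces the explicit formulas for $L(P)$ and $M(P)$ recorded in \Cref{sec:mestre-conic}; once it is in hand, the proposition follows by descent from $\overbar K$ to $K$.
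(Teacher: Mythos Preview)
The paper does not provide its own proof of this proposition; it is stated with attribution to Mestre~\cite[\S1.5]{mestre} and the equations for $M(P)$ are deferred to that reference and to the electronic data. So there is no proof in the paper to compare against.

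Your sketch is a faithful summary of Mestre's argument: the identification of the fibre $\phi^{-1}(P)$ as a $\PGL_2$-torsor whose class is represented by $L(P)$, the B\'ezout count giving six intersection points, and the reduction to a universal invariant-theoretic identity are all correct. Two minor points of precision: first, what you call the ``defining property'' of $L(P)$ is in fact a theorem---Mestre \emph{defines} $L(P)$ by the explicit Gram matrix in the Clebsch (or Igusa--Clebsch) invariants recorded in \Cref{sec:mestre-conic}, and one then proves that this conic represents the torsor class. Second, your final paragraph correctly identifies that the substantive content is the transvectant computation, but as written your proposal is an outline rather than a proof, since you do not carry out (or even state precisely) the identity to be verified. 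This is no worse than what the paper does, which is simply to cite Mestre; but if you intend this as a self-contained argument, you would need either to reproduce Mestre's covariant calculation or to give a more detailed pointer to where in \cite{mestre} the key identity is established.
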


We do not reproduce the equations for $M(P)$ here, but they may be found in \cite{mestre} and in the code attached to this paper~\cite{electronic}.

\section{The Mestre conic associated to a Hilbert modular surface}
\label{sec:theoretical-results}

Let $Y_{-}(D)$ be the Hilbert modular surface of fundamental discriminant $D$ and let $\mathcal{H}_D \subset \mathcal{A}_2$ be the corresponding Humbert surface, as in \Cref{sec:real-mult-hilb}.
Let $P \in \mathcal{H}_D(k) \cap \mathcal{M}_2(k)$ be a $k$-rational point and consider a genus $2$ curve $C/\overbar{k}$ corresponding to $P$. We say that $P$ is \emph{suitably generic} if the endomorphism ring of the Jacobian of $C$ is commutative and $\Aut(C) \cong \{ \pm 1\}$. Similarly, we say that a point $P \in Y_{-}(D)(k)$ is suitably generic if its image in $\mathcal{H}_D(k)$ is suitably generic. 

The following result and argument was kindly explained to us by Bjorn Poonen.

\begin{prop}
  \label{prop:pD-is-a-norm}
  Let $k$ be a field of characteristic $0$ and let $P \in \mathcal{H}_D(k)$ be a suitably generic $k$-rational point. Let $\lambda(P) \in k$ be such that $k(Q) = k(\sqrt{\lambda(P)})$ where $Q \in Y_{-}(D)(\overbar{k})$ is a preimage of $P$. Then the Mestre conic $L_D(P) \subset \PP^2$ associated to $P$ is $k$-isomorphic to a conic  of the form
  \begin{equation*}
    X^2 - D \lambda(P) Y^2 - q_{{D}}(P) Z^2
  \end{equation*}
  for some $q_D(P) \in k$.
\end{prop}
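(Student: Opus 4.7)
The plan is to reformulate the statement in terms of Brauer classes and then to construct a principally polarized abelian surface (equivalently, a genus $2$ curve) with moduli $P$ defined over $L := k(\sqrt{D\lambda(P)})$. Let $\alpha_P \in \mathrm{Br}(k)[2]$ denote the class of the Mestre conic $L_D(P)$. By the classical theory of quaternion algebras, $L_D(P)$ is $k$-isomorphic to a conic of the form $X^2 - D\lambda(P) Y^2 - q Z^2 = 0$ for some $q \in k^*$ if and only if $\alpha_P$ becomes trivial over $L$; equivalently, if and only if $L_D(P)$ has an $L$-rational point.

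To produce such a point, I use Galois descent from $\overbar k$ to $L$, leveraging the RM structure. Fix a representative $(A, \iota)/\overbar k$ of the lift $Q \in Y_{-}(D)(\overbar k)$ of $P$, equipped with principal polarization $\theta$ and the endomorphism $\psi := \iota(\sqrt D) \in \End^{\dagger}_{\overbar k}(A)$. The suitable genericity of $P$ gives $\Aut(A,\theta) = \{\pm 1\}$. Let $K_1 := k(Q) = k(\sqrt{\lambda(P)})$, $K_2 := k(\sqrt D)$, and $M := K_1 K_2$; the three quadratic subfields of $M/k$ are $K_1$, $K_2$, and $L$. Let $\sigma, \rho \in \Gal(M/k)$ generate $\Gal(M/K_2)$ and $\Gal(M/K_1)$ respectively, so that $\Gal(M/L) = \langle \sigma\rho \rangle$. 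Since the two preimages of $P$ in $Y_{-}(D)$ are $(A,\iota)$ and $(A, \bar\iota)$ (with $\bar\iota(\sqrt D) = -\psi$), Galois action under $\sigma$ yields an isomorphism $\phi \colon A^\sigma \to A$ of principally polarized abelian surfaces such that $\phi \circ \iota^\sigma = \bar\iota \circ \phi$, unique up to sign.

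The heart of the proof is to promote $\phi$ into a genuine descent datum from $\overbar k$ to $L$. The key idea is that $\rho$ acts trivially on $A$ (which is defined up to obstruction over $K_1$) but negates the formal symbol $\sqrt D$ appearing in $\psi$; this sign twist is designed to compensate for the sign obstruction from $\sigma$ in the construction of $\phi$. Concretely, one checks that $\phi \circ \phi^{\sigma\rho} = \pm\,\mathrm{id}_A$, where the sign is the product of the $\pm 1$ representing $\phi \circ \phi^\sigma$ (i.e., the restriction $\alpha_P|_{K_1}$ in $\{\pm 1\}$) with the sign introduced by $\rho$ conjugating $\psi$. The main technical obstacle is verifying that these two signs cancel, so that $\phi \circ \phi^{\sigma\rho} = +\mathrm{id}_A$; this establishes descent of $(A, \theta)$ to $L$ and hence $\alpha_P|_L = 0$. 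A transparent way to track the sign bookkeeping is to replace $\psi$ by the involution $\psi/\sqrt D$ in the endomorphism algebra tensored with $K_2$, where the interaction of $\rho$ with the RM endomorphism becomes an identity in $\mu_2$-cohomology rather than an isogeny calculation of degree $D^2$.
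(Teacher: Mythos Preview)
Your opening reduction is correct: by the theory of quaternion algebras it suffices to show that $L_D(P)$ acquires a rational point over $L = k(\sqrt{D\lambda(P)})$. From there, however, the descent argument you sketch does not go through as written.

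The central gap is that you never actually carry out the ``sign cancellation'' you identify as the main technical obstacle. You assert that $\phi\circ\phi^{\sigma\rho}=+\mathrm{id}_A$ because two signs cancel, but neither sign is well-defined in the way you describe. First, $\alpha_P|_{K_1}$ is a class in $\mathrm{Br}(K_1)[2]$, not an element of $\{\pm 1\}$; a specific cocycle value $c(\sigma,\sigma)\in\{\pm1\}$ depends on the choice of descent data and does not by itself determine the Brauer class. Second, your claim that ``$\rho$ acts trivially on $A$'' presupposes that $A$ is defined over $K_1$, which is exactly what the Mestre obstruction may prevent; and the ``sign introduced by $\rho$ conjugating $\psi$'' is not explained (conjugating $\psi$ by an isomorphism $A^\rho\to A$ yields an endomorphism, and there is no canonical sign attached). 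More structurally, you work with $\sigma,\rho\in\Gal(M/k)$ while $A$ lives over $\overbar k$, so expressions like $A^\sigma$ or $\phi^{\sigma\rho}$ require lifts to $\Gal(\overbar k/k)$; these lifts need not have order $2$, so $\phi\circ\phi^{\sigma\rho}$ is not even an automorphism of $A$ without further argument. Checking a single cocycle value on one element of $\Gal(M/L)$ would in any case not show that the full $2$-cocycle on $\Gal(\overbar k/L)$ is a coboundary.

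The paper's proof avoids all of this by \emph{not} attempting to descend $(A,\theta)$ directly. Instead it introduces an intermediate moduli space $\mathcal{N}'$ parametrising genus~$2$ curves with an ordered pair of canonical divisors, so that $\mathcal{S}'\to\mathcal{N}'$ is a $\mathbb{G}_m$-torsor (trivial by Hilbert~90). One then works over an auxiliary finite extension $K/k$ where $C$ exists, and observes that the eigenlines $\omega_1,\omega_2\in H^0(C,\Omega_C)$ of the RM operator $\tau$ (with $\tau^2=D$) are each fixed precisely by the subgroup where the characters $\chi_\lambda$ and $\chi_D$ agree; hence the point $(C,\omega_1,\omega_2)\in\mathcal{N}'$ is rational over $K(\sqrt{\lambda D})$, and intersecting over all such $K$ gives $k(\sqrt{\lambda D})$. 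This eigenspace calculation is the concrete mechanism making precise the ``sign cancellation'' you were reaching for, and it is what your argument is missing.
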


\begin{rem}
  If $P$ is the generic point of $\mathcal{H}_D$, then $\lambda_D \colonequals \lambda(P) \in \Q(\mathcal{H}_D)^\times$ is the function (unique up to multiplication by a square) such that $Y_{-}(D)$ is birational to the surface given by the vanishing of $z^2 - \lambda_D$ in $\mathbb{A}^1 \times \mathcal{H}_D$.
\end{rem}

\begin{proof}
  For simplicity write $\lambda = \lambda(P)$. By \cite[Theorem~5.5.3]{VoightQuaternions} it suffices to show that there exists a $k(\sqrt{\lambda D})$-rational point on $L_D(P)$. 
  
  Recall that we write $\mathcal{S}$ for the moduli space parametrising $6$ distinct points in $\PP^1$ and that we have an isomorphism $\mathcal{M}_2 \cong \mathcal{S} \sslash \PGL_2$. The fibre of the quotient morphism $\mathcal{S} \to \mathcal{M}_2$ above $P$ is a $\PGL_2$-torsor, and by construction defines the same class in $H^1(k, \PGL_2)$ as $L_D(P)$.

  Note that we may equivalently view $\mathcal{S}$ as the (coarse) moduli space parametrising pairs $(C, \phi)$ where $C$ is a genus $2$ curve, and $\phi \colon C \to \PP^1$ is a morphism of degree $2$. Define $\mathcal{N}$ to be the (coarse) moduli space parametrising triples $(C, \alpha, \beta)$ where $C$ is a genus $2$ curve and $\alpha,\beta \in |\mathcal{K}_C|$ are elements of the linear system of a canonical divisor $\mathcal{K}_C$ on $C$. Note that if $\Aut(C) \cong \{\pm 1\}$ then the degree $2$ morphism $\phi \colon C \to \PP^1$ is unique up to composition with an element of $\Aut(\PP^1)$, and therefore  $\phi^*(0), \phi^*(\infty) \in |\mathcal{K}_C|$.

  Let $\mathcal{M}_2'$ denote the Zariski open subvariety of $\mathcal{M}_2$ where $\Aut(C) \cong \{\pm 1\}$ and define $\mathcal{S}' = \mathcal{S} \times_{\mathcal{M}_2} \mathcal{M}_2'$ and $\mathcal{N}' = \mathcal{N} \times_{\mathcal{M}_2} \mathcal{M}_2'$. We have a commutative diagram 
  \begin{equation*}
    \begin{tikzcd}
      \mathcal{S}' \\
      \mathcal{N}' & \mathcal{M}_2'
      \arrow[from=1-1, to=2-1]
      \arrow[from=2-1, to=2-2]
      \arrow[from=1-1, to=2-2]
    \end{tikzcd}
  \end{equation*}
  where the morphism on the left is given by $(C, \phi) \mapsto (C, \phi^{*}(0), \phi^{*}(\infty))$. Hence $\mathcal{S}' \to \mathcal{N}'$ is a $\mathbb{G}_m$-torsor since $\mathbb{G}_m$ is isomorphic to the subgroup of $\Aut(\PP^1)$ fixing the points $0$ and $\infty$.

  By Hilbert's Theorem 90 a $\mathbb{G}_m$-torsor is trivial, so it suffices to show that, over $k(\sqrt{\lambda D})$, there exists a section $P \to \mathcal{N}'$ of the morphism $\mathcal{S}' \to \mathcal{N}'$.

  Choose a finite extension $K/k$ such that there exists a genus $2$ curve $C/K$ corresponding to $P$. Let $J = \Jac(C)$ be the Jacobian of $C$. By construction, there exists an element $\tau \in \End_{\overbar{k}}(J)$ such that $\tau^2 = D$. Since $J$ has abelian geometric endomorphism algebra (by assumption) $\tau$ is defined over $K(\sqrt{\lambda})$ by \cite[Proposition~2.1]{CM:RM5}.

  Note that the action of $\tau$ on $J$ induces an action on $H^0(J, \Omega_J) \cong H^0(C, \Omega_C)$ which, after base-changing to $K(\sqrt{\lambda})$, we may assume is given by the matrix $\bigl( \begin{smallmatrix}0 & 1\\ D & 0\end{smallmatrix}\bigr)$. Let $\omega_1, \omega_2 \in H^0(C, \Omega_C)$ be eigenvectors contained in the span of the $\sqrt{D}$ and $-\sqrt{D}$-eigenspaces of this action respectively.

  It is clear that $\omega_1$ and $\omega_2$ are fixed by the action of $\Gal(\overbar{k}/ K(\sqrt{\lambda}, \sqrt{D}))$. Let
  \begin{equation*}
    \chi_\lambda, \chi_D \colon \Gal(\overbar{k} / K) \to \{ \pm 1 \}
  \end{equation*}
  denote the quadratic characters associated to $K(\sqrt{\lambda})$ and $K(\sqrt{D})$ respectively. Then for each $i = 1,2$ the $1$-form $\omega_i$ is fixed by the action of $\sigma \in \Gal(\overbar{k}/K)$ if and only if $\chi_\lambda(\sigma) \chi_D(\sigma) = 1$. In particular both $\omega_1$ and $\omega_2$ are defined over $K(\sqrt{\lambda D})$.

  By abuse of notation let $\omega_1, \omega_2 \in |\mathcal{K}_C|$ denote the degree $2$ divisors cut out by the $1$-forms $\omega_1$ and $\omega_2$ respectively. By the above discussion the point $(C, \omega_1, \omega_2) \in \mathcal{N}(\overbar{k})$ is $K(\sqrt{\lambda D})$-rational, and since $P$ is $k$-rational we in fact have $k(\sqrt{\lambda D}) = \bigcap_{K} K(\sqrt{\lambda D})$, where we range over all possible choices of $K/k$. Since $K$ was arbitrary $(C, \omega_1, \omega_2)$ is $k(\sqrt{\lambda D})$-rational as required.
\end{proof}

The following proposition shows that the Mestre obstruction vanishes identically on the Hilbert modular surface $Y_{-}(D)$ for any $D \equiv 1 \pmod{8}$. \Cref{thm:qD=1} follows by taking the point $P \in Y_{-}(D)$ in \Cref{prop:1-mod-8} to be the generic point. That is, when $D \equiv 1 \pmod{8}$ we may take $q_D(P) = 1$ in \Cref{prop:pD-is-a-norm}.

\begin{prop}
  \label{prop:1-mod-8}
  Let $k$ be a field of characteristic coprime to $2D$ and let $P \in Y_{-}(D)(k)$ be a suitably generic $k$-rational point. The Mestre conic $L_D(P) \subset \PP^2$ associated to $P$ has a $k$-rational point whenever $D \equiv 1 \pmod{8}$.
\end{prop}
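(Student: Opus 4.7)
The plan is to show that the curve $C/\bar k$ with moduli above $P$ admits a Weierstrass model $y^2 = \varphi(x)\psi(x)$ over $k$ with cubic factors $\varphi,\psi \in k[x]$; any such model produces a $k$-rational point on $L_D(P)$, since by construction the Mestre conic parametrises (up to $\PGL_2$) the ways of exhibiting $C$ as a double cover of $\PP^1$. The key observation is that when $D \equiv 1 \pmod 8$ the prime $2$ splits in $\calO_D$, and this splitting induces a canonical $k$-rational partition of the six Weierstrass points of $C$ into two triples.

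First I would note that $\omega \colonequals (1+\sqrt D)/2 \in \calO_D$ satisfies $\omega^2 - \omega = (D-1)/4 \equiv 0 \pmod 2$, so $\omega$ reduces to an idempotent in $\calO_D/2 \cong \F_2 \times \F_2$. Since $J[2]$ is free of rank $2$ over $\calO_D/2$, the $k$-rational endomorphism $\iota(\omega)$ induces a $k$-rational splitting $J[2] = J_1 \oplus J_2$ into two $\F_2$-planes (namely $J[\mathfrak p_1]$ and $J[\mathfrak p_2]$ for the two primes above $2$). Because $\iota(\omega)$ is Rosati-fixed, it is self-adjoint for the Weil pairing $e_2$; pairing $x \in J_1$ against $y \in J_2$ then gives $e_2(x,y) = e_2(\iota(\omega)x, y) = e_2(x, \iota(\omega)y) = 1$, so $J_2 \subseteq J_1^\perp$. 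Equality of cardinalities forces $J_2 = J_1^\perp$, and since $J_1 \ne J_2$ neither plane is isotropic, so each $J_i$ is a non-degenerate symplectic $2$-plane.

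The next step, which carries the main combinatorial content, is to translate this decomposition into a partition of Weierstrass points. Under the standard identification $J[2] \cong \{S \subseteq \{W_1,\ldots,W_6\} : |S| \text{ even}\}/\{S\sim S^c\}$, the Weil pairing takes the form $e_2(S,T) = (-1)^{|S \cap T|}$; a direct check shows that the non-degenerate symplectic $2$-planes in $J[2]$ are exactly those of the form $\{\emptyset,\{i,j\},\{j,\ell\},\{i,\ell\}\}$ for a triple $\{i,j,\ell\}\subseteq\{1,\ldots,6\}$, and that orthogonal complementation corresponds to complementation of triples. Hence $(J_1,J_2)$ records a Galois-stable ordered partition $\{W_1,\ldots,W_6\} = S_1 \sqcup S_2$ with $|S_1| = |S_2| = 3$, each $S_i$ individually Galois-stable because $\iota(\omega)$ is $k$-rational.

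Finally, the suitably-generic hypothesis gives $\Aut(C) = \{\pm 1\}$, so the hyperelliptic quotient $Q = C/\langle \text{hyp.\ invol.}\rangle$ is a conic canonically defined over $k$. Since Weierstrass points are fixed by the hyperelliptic involution, the image of each $S_i$ on $Q$ is an effective $k$-rational divisor of degree $3$; the degree map on $\Pic(Q)$ has image $2\Z$ when $Q$ is pointless, so the existence of a $k$-rational divisor of odd degree forces $Q \cong \PP^1_k$. Choosing a $k$-rational affine coordinate $x$ on $Q$ then gives the model $y^2 = \varphi(x)\psi(x)$ with $\varphi(x) = \prod_{W \in S_1}(x-x(W))$ and $\psi(x) = \prod_{W \in S_2}(x-x(W))$, both in $k[x]$ by Galois-stability, and this Weierstrass model corresponds to a $k$-rational point of $L_D(P)$. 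The main conceptual step is the third paragraph, matching the RM-induced symplectic decomposition with $\{3,3\}$-partitions of Weierstrass points; the remaining ingredients (Rosati self-adjointness, and the fact that an odd-degree $k$-rational divisor trivialises the Brauer class of a conic) are standard, though care is needed because $Y_-(D)$ is only a coarse moduli space so that $(J,\iota)$ need not itself descend to $k$.
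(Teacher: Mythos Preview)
Your argument captures the same mechanism as the paper's proof: when $D \equiv 1 \pmod 8$ the prime $2$ splits in $\calO_D$, and the resulting decomposition of $J[2]$ into two orthogonal symplectic $\F_2$-planes corresponds to a partition of the six Weierstrass points into two triples, yielding an odd-degree divisor on the Mestre conic. Your treatment of the orthogonality (via the idempotent $\omega \bmod 2$ and Rosati self-adjointness) is in fact cleaner than the paper's Lemma~3.4, and your combinatorial identification of non-degenerate planes with triples is exactly the content of the paper's Lemma~3.5.

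There is, however, a genuine gap at the descent step. You assert that ``the $k$-rational endomorphism $\iota(\omega)$'' gives a $k$-rational splitting, and later that each $S_i$ is Galois-stable ``because $\iota(\omega)$ is $k$-rational''. But this is circular: the Mestre obstruction is precisely the obstruction to $C$ (hence $J$) being defined over $k$, so a priori $J$ has no $k$-structure on which $\iota(\omega)$ could be $k$-rational. You flag this yourself in the final sentence (``$(J,\iota)$ need not itself descend to $k$''), but you do not supply the missing argument. Everything from the second paragraph onward is really carried out over $\bar k$, and the claim that the partition of $W = L\cap M$ is $\Gal(\bar k/k)$-stable needs justification.

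The paper closes this gap by an auxiliary quadratic extension: choose $K/k$ quadratic with $L_D(P)(K)\neq\emptyset$ and $K$ linearly disjoint from $k(L\cap M)$. Over $K$ the curve $C$ exists, the RM is defined over $K$ (this uses the suitably-generic hypothesis via \cite[Proposition~2.1]{CM:RM5}), and your argument produces the partition $S_1 \sqcup S_2$ as $K$-rational divisors on $L_D(P)$. Linear disjointness then forces $\Gal(K/k)$ to fix each $S_i$, so they descend to $k$. An alternative fix would be a direct moduli-theoretic argument that the ordered pair $(S_1,S_2)$, labelled by $(\mathfrak p,\bar{\mathfrak p})$, is an invariant of the isomorphism class recorded by the $k$-point $P\in Y_-(D)$; this is plausible but you have not supplied it.
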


\begin{proof}
  If $k$ is finite, then any conic has a $k$-rational point, so we may assume without loss of generality that $k$ is infinite. We first need the following lemmas:

  \begin{lemma}
    \label{lemma:orthog-decomp} Let $D > 0$ be any fundamental discriminant. Suppose that $p$ is a prime number which splits in $\mathcal{O}_D$ as a product of prime ideals $(p) = \mathfrak{p}\bar{\mathfrak{p}}$.
    Let $k$ be a field of characteristic coprime to $pD$ and let $J/k$ be a principally polarised abelian surface with RM $D$ defined over $k$. Denote by $e_p \colon J[p] \times J[p] \to \mu_p$ the $p$-Weil pairing induced by the principal polarisation on $J$. Then we have a direct sum decomposition of Galois modules $J[p] = J[\mathfrak{p}] \oplus J[\bar{\mathfrak{p}}]$ and moreover $e_p(x,y) = 1$ for each pair $x \in J[\mathfrak{p}]$ and $y \in J[\bar{\mathfrak{p}}]$.
  \end{lemma}

  \begin{proof}
    The decompostion $J[p] = J[\mathfrak{p}] \oplus J[\bar{\mathfrak{p}}]$ is an isomorphism of Galois modules since the action of $\mathcal{O}_D$ is defined over $k$.
    
    We claim that for each $y \in J[\bar{\mathfrak{p}}]$ there exists $y' \in J[p]$ and $\eta \in \mathfrak{p}$ such that $y = \eta y'$. If there exists $\eta \in \mathfrak{p}$ such that $\eta J[p] = J[\bar{\mathfrak{p}}]$ then this is clear, so suppose that for each $a \in \mathfrak{p}$ we have $a J[p] \subsetneq J[\bar{\mathfrak{p}}]$. Since $J[\mathfrak{p}]$ is an $\F_p$-module of rank $2$ there exist $a, b \in \mathfrak{p}$ such that $\ker a \neq \ker b$. In particular $a J[p]  \neq b J[p]$, so choosing $z \in J[p] \setminus (\ker a \cup \ker b)$ we see that $\{a z, b z \}$ is an $\F_p$-basis for $J[\bar{\mathfrak{p}}]$ and therefore there exist $m,n \in \Z$ such that $y = (n a + m b) z$.

    The $\mathcal{O}_D$-action on $J$ is invariant under the Rosati involution by assumption, and therefore we have $e_p(a x, y) = e_p(x, a y)$ for any $a \in \mathcal{O}_D$ and $x,y \in J[p]$. But $x \in \ker \eta$ since $\eta \in \mathfrak{p}$, and therefore $e_p(x,y) = e_p(x,\eta y') = e_p(\eta x, y') = 1$, as required.
  \end{proof}
  
  \begin{lemma}
    \label{lemma:1-mod-8-factor}
    Let $k$ be a field of characteristic coprime to $2D$, $C/k$ a curve of genus $2$, $D \equiv 1 \pmod{8}$, and suppose that $J = \Jac(C)$ has RM $D$ defined over $k$. Let $C : y^2 = f(x)$ be a Weierstrass equation for $C$, where $f(x) \in k[x]$ is a sextic polynomial. Then there exists a factorisation $f(x) = \poly(x) \polytwo(x)$ as a product of two cubic polynomials over $k$.
  \end{lemma}

  \begin{proof}
    Since $D \equiv 1 \pmod{8}$ the ideal $(2) \subset \mathcal{O}_D$ splits as a product $(2) = \mathfrak{p} \bar{\mathfrak{p}}$. We equip $J$ with the canonical principal polar{\iz}ation and let $e_2 \colon J[2] \times J[2] \to \mu_2$ denote the induced $2$-Weil pairing on $J$. 

    Let $x,y \in J[\mathfrak{p}] \setminus \{O\}$ be distinct elements. We claim that $e_2(x,y) \neq 1$. Indeed, if $e_2(x,y) = 1$ then since $J[2]$ is spanned by $x$, $y$, and $J[\bar{\mathfrak{p}}]$, by \Cref{lemma:orthog-decomp} the $2$-Weil pairing $e_2 \colon J[2] \times J[2] \to \mu_2$ would be trivial, which is not the case.
    
    Following \cite[Section~4]{Bruin-Doerksen} let $x_1,...,x_6 \in \overbar{k}$ denote the roots of $f(x)$, and let $T_{i,j} \in \Pic^0(C/\overbar{k})$ denote the divisor class $T_{i,j} = T_{j,i} = [(x_i,0) - (x_j,0)]$. By \cite[Lemma~8.1.4]{Smith_Thesis} we have $e_2(T_{i,j}, T_{s,t}) = (-1)^{\#\{i,j,s,t\}}$. In particular, since for each distinct $x,y \in J[\mathfrak{p}] \setminus \{O\}$ we have $e_2(x,y) = -1$, it may be assumed without loss of generality that $J[\mathfrak{p}] = \{O, T_{1,2}, T_{2,3}, T_{1,3}\}$. But the $\Gal(\overbar{k}/k)$-action on $J(\overbar{k})$ stabilises $J[\mathfrak{p}]$, so in particular $\Gal(\overbar{k}/k)$ also stabilises $\{x_1, x_2, x_3\}$, and therefore $\poly(x) = (x - x_1)(x - x_2)(x - x_3)$ is contained in $k[x]$. But then setting $\polytwo(x) = f(x)/\poly(x)$ we obtain the factor{\iz}ation $f(x) = \poly(x)\polytwo(x)$, as required.
  \end{proof}

  Let $L = L_D(P)$ and $M = M_D(P)$ denote the Mestre conic and cubic associated to $P$ respectively. Since $k$ is infinite there exists a separable quadratic extension $K/k$ such that $L$ obtains a point over $K$, and $K \cap k(L \cap M) = k$. By construction the Mestre obstruction for $P$ vanishes over $K$, and there exists a genus $2$ curve $C/K$ corresponding to $P$. The geometric endomorphism ring $\End_{\overbar k}(\Jac(C))$ is abelian since $P$ is assumed to be suitably generic. It follows from \cite[Proposition~2.1]{CM:RM5} that $\Jac(C)$ has $\mathcal{O}_D$-multiplication over $K$, and therefore if $y^2 = f(x)$ is a Weierstrass equation for $C$. By \Cref{lemma:1-mod-8-factor} the polynomial $f(x)$ factors over $K$ as a product of cubics $f(x) = \poly(x) \polytwo(x)$.

  Let $S \subset \mathbb{A}^1$ denote the $K$-variety cut out by the vanishing of the polynomial $f$. By construction we have a $K$-isomorphism of varieties $L \cap M \cong S$. In particular, over $K$, we have a decomposition $L \cap M = S_1 \amalg S_2 $ where $S_1$ and $S_2$ are degree 3 $K$-rational divisors on $L$ (isomorphic as $K$-varieties to the vanishing sets of $\poly(x)$ and $\polytwo(x)$ respectively).

  Since $K \cap k(L \cap M) = k$ the $\Gal(K/k)$-action on $L \cap M$ must stabilise $S_1$ and $S_2$. In particular both $S_1$ and $S_2$ are defined over $k$. But if $\mathcal{K}_{L}$ is a canonical divisor on $L$, then $S_1 - \mathcal{K}_{L}$ is a $k$-rational degree 1 divisor on $L$, and is therefore linearly equivalent to a $k$-rational point on $L$ (since $L$ has genus $0$).  
\end{proof}

%
%

\section{Simplified models for the generic Mestre conic}
\label{sec:simplifications}

Mestre's conic is quite simple in terms of Clebsch invariants, however the
Clebsch invariants are quite complicated rational functions on the Elkies--Kumar model for the Humbert surface. In this section we present two simplifications
of the Mestre conic, firstly in terms of Igusa--Clebsch invariants (see \Cref{sec:ICmestre}), and secondly in terms of
certain related quantities defined by Elkies--Kumar~\cite{Elkies-Kumar} for Hilbert modular surfaces (see \Cref{sec:conic-EK}). In each case, we record the cubic form satisfying the analogue of \Cref{prop:conic-cubic} in the electronic data~\cite{electronic}.

We remark that the first simplification is useful for reconstructing
genus 2 curves over number fields, say, from Igusa or Igusa--Clebsch invariants, as it tends to give genus 2 curves with much smaller coefficients
than using Mestre's construction \cite{mestre} directly. 

\subsection{Mestre conics for Igusa--Clebsch invariants} \label{sec:ICmestre}
Let $T^{(1)}$ be the Gram matrix for the Mestre conic $Q_1$, viewed over $R = k[I_2, I_4, I_6, I_{10}]$.
The upper triangular coefficients of $T^{(1)}$ are given in \cref{sec:mestre-conic}.

Let $e_i$ is the $i$-th standard basis vector of $R^3$ for $i = 1, 2, 3$.
Viewing each of the monomials $I_2, I_4, I_6, I_{10}$ as degree 1 over $k$, we see that $Q_1(e_1), Q_1(e_2), Q_1(e_3)$ have degrees 3, 5, 7.  We will simplify the Mestre conic in terms of Igusa--Clebsch invariants by making some change of bases to lower the degree of the coefficients.

Let $v_1 = I_2 e_2 + 450e_3$ and $v_2 = I_2 e_1 + 450e_2$.
Then $Q_1(v_1)$ has degree 5 and $Q_1(v_2)$ has degree 3.
Let $T^{(2)}$ be the Gram matrix for $Q_1$ with respect to the basis
$\{ e_1, e_2, v_1 \}$.  Let $T^{(3)}$ be the Gram matrix for $Q_2$
with respect to $\{ 9000e_1, 1350v_2, 607500e_3 \}$. 
Now the degrees of the diagonal terms of $T^{(3)}$ are 3, 3, and 5, with the degree 5 entry of $T^{(3)}$ being 
\begin{align*}
T^{(3)}_{3,3} = 267 I_{2}^{3} I_{4}^{2} + 1515 I_{2} I_{4}^{3} - 1485 I_{2}^{2} I_{4} I_{6} - 3600 I_{4}^{2} I_{6}&\\ +\, 2025 I_{2} I_{6}^{2} - 141750 I_{2}^{2} I_{10} - 1215000 I_{4} I_{10}&.
\end{align*}

We can do one more simplification to get rid of the $I_2^2I_{10} $ and 
$I_2^3 I_4^2$ terms from the lower right entry of $T^{(3)}$.  Let $T^{(4)}$ be
the Gram matrix for $Q_3$ with respect to the basis
$\{ e_1, e_2, \frac{v_3}{10} \}$ where $v_1 = 7I_4 e_1 + e_2 I_2 - 2e_3$.
The upper triangular coefficients of $T^{(4)}$ are:
\begin{align*}
T^{(4)}_{1,1} &= -3 I_{2}^{3} - 140 I_{2} I_{4} + 800 I_{6}
\\
T^{(4)}_{1,2} &= 7 I_{2}^{2} I_{4} + 80 I_{4}^{2} - 30 I_{2} I_{6}
\\
T^{(4)}_{1,3} &= -230 I_{2} I_{4}^{2} - 9 I_{2}^{2} I_{6} + 1040 I_{4} I_{6} + 108000 I_{10}
 \\
T^{(4)}_{2,2} &= 117 I_{2} I_{4}^{2} - 360 I_{4} I_{6} - 81000 I_{10}
 \\
T^{(4)}_{2,3} &= -50 I_{2}^{2} I_{4}^{2} + 20 I_{4}^{3} + 321 I_{2} I_{4} I_{6} - 540 I_{6}^{2} + 24300 I_{2} I_{10}
 \\
T^{(4)}_{3,3} &= -200 I_{2} I_{4}^{3} + 920 I_{4}^{2} I_{6} - 27 I_{2} I_{6}^{2} + 102600 I_{4} I_{10}
\end{align*}

We will call this transformed Mestre conic the \emph{IC-simplified Mestre
conic}.

\subsection{Mestre conics for Elkies--Kumar models}
\label{sec:conic-EK}
The Elkies--Kumar models for Hilbert modular surfaces have
Igusa--Clebsch invariants of the form
\[ (I_2, I_4, I_6, I_{10}) = \left(-24(B_1/A_1), -12A, 96 (A B_1/A_1) - 36B, -4A_1B_2 \right) \]
where $A, A_1, B, B_1, B_2$ are rational functions on
the corresponding Humbert surface.

Let $T^{(1)}$ be the Gram matrix for the IC-simplified Mestre conic 
$Q_1$ in terms of $A$, $A_1$, $B$, $B_1$, and $B_2$.  
Let $T^{(2)}$ be $\frac{A_1^3}2$ times the Gram matrix for $Q_1$ with
respect to the basis $\{ \frac 18 e_1, \frac{e_2}{36A_1} , \frac 1{24}e_3 \}$.
The upper triangular entries of $T^{(2)}$ are
\begin{align*}
T^{(2)}_{1,1} &= -225 A_{1}^{3} B + 285 A A_{1}^{2} B_{1} + 324 B_{1}^{3}
\\
T^{(2)}_{1,2} &= 20 A^{2} A_{1}^{2} - 45 A_{1} B B_{1} + 36 A B_{1}^{2}
\\
T^{(2)}_{1,3} &= 1170 A A_{1}^{3} B - 1050 A^{2} A_{1}^{2} B_{1} - 1125 A_{1}^{4} B_{2} + 486 A_{1} B B_{1}^{2} - 1296 A B_{1}^{3}
 \\
T^{(2)}_{2,2} &= -60 A A_{1} B + 4 A^{2} B_{1} + 125 A_{1}^{2} B_{2}
 \\
T^{(2)}_{2,3} &= -20 A^{3} A_{1}^{2} - 405 A_{1}^{2} B^{2} + 234 A A_{1} B B_{1} - 144 A^{2} B_{1}^{2} + 1350 A_{1}^{2} B_{1} B_{2}
 \\
T^{(2)}_{3,3} &= -4140 A^{2} A_{1}^{3} B + 3840 A^{3} A_{1}^{2} B_{1} + 4275 A A_{1}^{4} B_{2}\\ &\hspace{0.45cm}+ 729 A_{1}^{2} B^{2} B_{1} - 3888 A A_{1} B B_{1}^{2} + 5184 A^{2} B_{1}^{3}.
\end{align*}

Let $Q_2$ be the associated ternary quadratic form with
coefficients in $\Z[A,A_1, B, B_1, B_2]$.  Note that $A_1^4$ divides the discriminant of $Q_2$.
In attempting to kill off the $A^2 B_1^3$ term from $Q_2(e_3)$,
we can simplify $T^{(2)}_{3,3}$  quite a bit, and remove a factor of
$A_1^2$ from the discriminant.
Let $v_2 = 4A e_1 + e_3$.  Then 
\[ Q_2(v_2) = -27  A_{1}^{2}  (-60 A_{1} A^{2} B + 175 A_{1}^{2} A B_{2} - 27 B_{1} B^{2}), \]
so we let $T^{(3)}$ be the Gram matrix of $Q_2$ with respect to
$\{ e_1, e_2, \frac{v_2}{3A_1} \}$.   The upper triangular coefficients of $T^{(3)}$ are
\begingroup
\allowdisplaybreaks
\begin{align*}
T^{(3)}_{1,1} &= -225 A_{1}^{3} B + 285 A A_{1}^{2} B_{1} + 324 B_{1}^{3}
\\
T^{(3)}_{1,2} &= 20 A^{2} A_{1}^{2} - 45 A_{1} B B_{1} + 36 A B_{1}^{2}
\\
T^{(3)}_{1,3} &= 90 A A_{1}^{2} B + 30 A^{2} A_{1} B_{1} - 375 A_{1}^{3} B_{2} + 162 B B_{1}^{2}
 \\
T^{(3)}_{2,2} &= -60 A A_{1} B + 4 A^{2} B_{1} + 125 A_{1}^{2} B_{2}
 \\
T^{(3)}_{2,3} &= 20 A^{3} A_{1} - 135 A_{1} B^{2} + 18 A B B_{1} + 450 A_{1} B_{1} B_{2}
 \\
T^{(3)}_{3,3} &= 180 A^{2} A_{1} B - 525 A A_{1}^{2} B_{2} + 81 B^{2} B_{1}.
\end{align*}
\endgroup

We call the resulting quadratic form $Q_3$ the \emph{RM-simplified Mestre conic}.

Note that $A_1^2$ divides the discriminant of $Q_3$, and $A_1$ divides the diagonal minors of $T^{(3)}$.  While one can remove another factor of $A_1$ from the discriminant, we do not know how to do this without introducing other factors into the discriminant.

%
%

\section{An algorithm for minim{\iz}ing a conic over \texorpdfstring{$\Q(t_1,t_2)$}{{\unichar{"211A}}(t{\textunderscore}1,t{\textunderscore}2)}}
\label{sec:algorithms}

Let $R$ be an integral domain and $k$ its field of fractions.
Suppose $L : Q(X,Y,Z) = 0$ is a conic over a polynomial ring 
$R[t_1, \dots, t_m]$.   
We say $L$ is \emph{minimal} if its discriminant has minimal degree among the
$k(t_1, \dots, t_m)$-equivalence classes of $L$.
We would also like a notion of a \emph{reduced minimal form}, to encapsulate the idea that the coefficients are also simple as possible.  If $L$ is minimal and diagonal, then the coefficient degrees add up to the discriminant degree, and this should be considered reduced.   

In this section we present algorithms to search for a reduced minimal
form of $L$ in a certain (algorithmically defined) subset of the 
$k(t_1, \dots, t_m)$-equivalence class of $L$.  These
algorithms are tailored to
our case of interest: Mestre conics for Hilbert modular surfaces
over $\Q$. For concreteness and ease of exposition, 
we will assume in what follows $L$ is a conic defined over $\Z[t_1, t_2]$.
However, our algorithms can be adapted to more general settings,
and much of it makes sense beyond the situation where $L$ is
a conic in $\mathbb P^2$ (see \Cref{rem:more-generality}).  

\begin{rem} \label{rem:redmin}
In general, it may not be easy to verify whether $L$ is minimal.
In our situation, where $L$ is the Mestre conic over a Hilbert modular surface $Y_-(D)$ with birational model $z^2 = \lambda_D$, \cref{prop:pD-is-a-norm} tells us that our Mestre conic can be put in the form $X^2 - D \lambda_D Y^2 - q_D Z^2$ for some polynomial $q_D \in \Z[g,h]$.  For our purposes, we consider such a Mestre conic to be in  \emph{reduced minimal form} if $q_D$ has no non-constant factors in $\Z[g,h]$ which are norms from $\Q(g,h)(\sqrt {D \lambda_D})$.  
\end{rem}

\subsection{Minim{\iz}ation}
\label{sec:conic-minim{\iz}ation-algo}
Let $A$ be a discrete valuation ring with maximal ideal $\mathfrak{p} = (\pi)$. Let $S/A$ be an affine scheme flat of relative dimension $1$ over $\Spec A$ and suppose that $S$ has smooth generic fibre. Explicitly we may take $S$ to be given by the vanishing of polynomials $f_1, ..., f_\ell \in A[x_1, ..., x_n]$ in $\mathbb{A}^n_{A}$ such that $f_i \not\in \pi A[x_1, ..., x_n]$ for all $i = 1,...,\ell$. Assume that the point $\mathfrak{m} = (\pi, x_1, ..., x_n)$ lies on $S$ (this is the origin on the special fibre of $S$). Following \cite[IV.7]{silverman-advanced} we define the {blow-up} of $S$ at $\mathfrak{m}$ to be the subscheme of $S \times \PP_{\Z}^{n}$ given by the equations
\begin{equation*}
  x_i y_0 - \pi y_i \quad \text{ and } \quad x_i y_j - x_j y_i  
\end{equation*}
where $1 \leq i,j \leq n$.

Just as curve and surface singularities (over $\C$) may be resolved by iterated blow-ups and normal{\iz}ations, we may hope that in our setting arithmetic blow-ups will (at least partially) resolve the singular points on the special fibre of $S$. Indeed, this approach is utilised in Tate's algorithm for finding the minimal regular model of an elliptic curve over a DVR~\cite[IV.7]{silverman-advanced}. We will refer to this process as \emph{minim{\iz}ation} in reference to the fact that when $E/\Z_{(p)}$ is an elliptic curve, repeatedly applying the above algorithm ``minim{\iz}es the exponent of $p$ in the discriminant of $E$''.

If $B$ is a unique factorisation domain and $\pi \in B$ is a prime element, we denote by $v_{\pi}(x)$ the $\pi$-adic valuation of an element $x \in B$.

\begin{ex}[An algorithm for minim{\iz}ing a conic in \texorpdfstring{$\PP^2_{\Q}$}{P2Q}]
  \label{ex:min-conic}
  In the following example we illustrate how arithmetic blow-ups allow us to reproduce the classical reduction algorithm for ternary quadratic forms over $\Q$. The approach we present here is a closely related to (a simplified form of) \cite[Algorithm~I]{CremonaRusin_conics}.

  Let $Q(X,Y,Z) \in \Z[X, Y, Z]$ be a (homogeneous) degree $2$ polynomial which defines a non-singular curve $C \subset \PP^2_{\Q}$. After possibly rescaling $Q$ we may assume that the Gram matrix has entries in $\Z$ and that $\Delta(Q) \in \Z$.
  
  For simplicity we avoid characteristic $2$ and consider $Q$ as defining a scheme $\mathcal{C} \subset \PP^2_{\Z[1/2]}$. For each prime number $p \neq 2$ the fibre of $\mathcal{C}$ at $p$ is singular if and only if $\Delta(Q) \equiv 0 \pmod{p}$. This suggests the following minim{\iz}ation algorithm:

  If $p^2 \mid \Delta(Q)$ we consider the scheme $\mathcal{C}_p = \mathcal{C} \times_{\Z[1/2]} \Spec \Z_{(p)}$. The singular subscheme of the special fibre at $p$ consists of either a point, or a double line. Let $\overbar{U} \in \SL_3(\F_p)$ be a matrix which transforms the singular locus to the point $(0:0:1) \in \PP^2_{\F_p}$, respectively the line $\{Z = 0\} \subset \PP^2_{\F_p}$. By the existence of Smith normal form, the matrix $\overbar{U}$ lifts to a matrix $U \in \SL_2(\Z)$, which we then apply to $\mathcal{C}$ by setting $M' = U^\mathrm{T} M U$. Since $\det(U) = 1$ this leaves $\Delta(Q)$ unchanged.

  In the former case, choosing the affine patch where $Z = 1$, and setting $x = X/Z$ and $y = Y/Z$ we blow-up the singular point $\mathfrak{m} = (p, x, y)$ on $\mathcal{C}_p$. Since $p^2 \mid \Delta(Q)$
  this is given (on an open subscheme of the blow-up) by the vanishing of the integral non-homogeneous quadratic form $Q(px, py,1)/p^2$. After homogenising we obtain the ternary quadratic form $Q'(X, Y, Z) = Q(X,Y,p^{-1}Z)$. In particular $\Delta(Q') = \Delta(Q)/p^2$.

  In the case of the double line at $Z = 0$, we set $Q'(X, Y, Z) = Q(X, Y, pZ)/p^k$ for some $k \in \{1,2\}$. We have $\Delta(Q') = p^{2-3k} \Delta(Q)$.
  
  It follows that after finitely many iterations $v_p(\Delta(Q)) \leq 1$. Repeating the above algorithm at each odd prime $p \mid \Delta(Q)$, then after finitely many steps we have $v_p(\Delta(Q)) \leq 1$. We refer to this as a \emph{minimal model for $Q$} (away from the prime $2$).
\end{ex}

It is important to note that the ``global'' approach in \Cref{ex:min-conic} may fail when the base ring (in that case $\Z$) fails to be a principal ideal
domain. More specifically, if $A$ is an integral domain and $\mathfrak{p}$ is a maximal ideal of $A$ it is not necessarily true (unless $A$ is a PID) that we may lift a matrix $\overbar{U} \in \SL_3(A/\mathfrak{p})$ to a matrix $U \in \SL_3(A)$.

Nevertheless, we will see that in practice applying \Cref{ex:min-conic} in the case when $A = \Z[t_1, t_2]$ provides a useful algorithm for minim{\iz}ing a ternary quadratic form $Q(X,Y,Z)$ with coefficients in $\Z[t_1,t_2]$, which we describe in \Cref{alg:desing}. Let $\pi \in \Z[t_1,t_2]$ be a prime factor of $\Delta(Q)$. While we cannot in general hope for a lift $U \in \SL_3(\Z[t_1,t_2])$ which moves the singular point (or line) on the special fibre to the origin (respectively $Z = 0$), we can always choose a lift $U \in \mathrm{M}_3(\Z[t_1,t_2]) \cap \GL_3(\Z[t_1,t_2]_{(\pi)})$. In this case we introduce a factor of $\det(U)^2$ into $\Delta(Q)$.

Let $L/\Z[t_1,t_2]$ be a conic given by a Gram matrix $M$ with coefficients in $\Z[t_1,t_2]$. Let $\pi \in \Z[t_1,t_2]$ be an irreducible element. We define $\overbar{L}_\pi$ to be the generic fibre of the reduction of $L$ modulo the ideal $(\pi)$. If $a,b,c \in \Z[t_1,t_2]$ we write $\diag(a,b,c)$ for the diagonal matrix with entries $a$, $b$, and $c$. For each matrix $U \in \mathrm{M}_3(\Z[t_1,t_2]) \cap \GL_3(\Q(t_1,t_2))$ we denote by $L^U/\Z[t_1,t_2]$ the conic with Gram matrix $U^{T} M U$.

\begin{algorithm}
  \caption{{\tt Minim{\iz}eAtPi}$(L,\pi)$: Minim{\iz}e the conic $L$ at a prime $\pi \mid \Delta(L)$. \\
    Input: A conic $L/\Z[t_1,t_2]$ and a prime element $\pi \in \Z[t_1,t_2]$ where $(\pi) \ne  (2)$ and $\pi^2 \mid \Delta(L)$. \\
    Output: A conic $L'/\Z[t_1,t_2]$ such that $v_{\pi}(\Delta(L')) \leq v_{\pi}(\Delta(L)) - 2$.
    }

  \label{alg:desing}
  \begin{algorithmic}
    \State $\mathtt{Sing} \gets$ the singular subscheme of $\overbar{L}_{\pi}$
    \If{$\dim \mathtt{Sing} = 0$}
      \State $\overbar{U} \gets$ a matrix in $\SL_3(\Z[t_1,t_2]/(\pi))$ which moves $\mathtt{Sing}$ to $(0:0:1)$
      \State $U \gets$ a lift of $\overbar{U}$ to $\mathrm{M}_3(\Z[t_1,t_2]) \cap \GL_3(\Z[t_1,t_2]_{(\pi)})$
      \State $L' \gets L^U$
      \State $V \gets \diag(1,1,\pi^{-1})$
      \State $L' \gets (L')^V$
    \Else
      \State $\overbar{U} \gets$ a matrix in $\SL_3(\Z[t_1,t_2]/(\pi))$ which moves $\mathtt{Sing}$ to $\{Z=0\}$
      \State $U \gets$ a lift of $\overbar{U}$ to $\mathrm{M}_3(\Z[t_1,t_2]) \cap \GL_3(\Z[t_1,t_2]_{(\pi)})$
      \State $L' \gets L^U$
      \State $V \gets \diag(1,1,\pi)$
      \State $k \gets v_{\pi}(L')$
      \State $L' \gets \pi^{-k} (L')^V$
    \EndIf
    \State \Return $L'$
  \end{algorithmic}
\end{algorithm}

\begin{rem}
  \label{rem:desing-order}
  Note that \Cref{alg:desing} will
typically increase $Y^2$ or $Z^2$-coefficient degrees if the
diagonal degrees (i.e., $X^2$, $Y^2$, and $Z^2$-coefficients) of $L$ are 
not in increasing order.  In practice we therefore assume the diagonal degrees
are increasing by permuting the variables $X, Y, Z$.
\end{rem}

\begin{rem}
  \label{rem:more-generality}
  \Cref{alg:desing} can be generalised to the case of a scheme $S/\Z[t_1,...,t_m]$ of relative dimension zero or one equipped with a ``bad prime element'' $\pi \in \Z[t_1,...,t_m]$. One such situation is when $S$ is relative dimension zero and given by a single homogeneous polynomial $f(X,Y) \in \Z[t_1,...,t_m][X,Y]$. This viewpoint is useful for simplifying birational models for (small degree) coverings $X \to \PP^m$, by considering $X$ to be a hypersurface in $\PP^1_{\Z[t_1,..,t_m]}$ cut out by $f(X,Y)$.

  Even more generally one may replace $\Z[t_1,...,t_m]$ with a geometrically integral base scheme $T$.
\end{rem}

\subsection{Searching for minimal models}

Let $L : Q(X,Y,Z) = 0$ be a conic over $\Q(t_1, t_2)$.  
By the diagonal coefficients of $L$, we mean the $X^2$, $Y^2$
and $Z^2$-coefficients of $Q$.

By rescaling, we may assume the coefficients of $Q$ lie
in $\Z[t_1, t_2]$ and have gcd 1.  If $\pi \in \Z[t_1, t_2]$ is a non-unit such that
$\pi^2$ divides a diagonal coefficient (e.g., the $X^2$-coefficient) 
and $\pi$ divides each coefficient involving the same variable
(e.g., the $XY$ and $XZ$-coefficients), then we replace that variable
with itself divided by $\pi$ (e.g., replace $X$ with $X/\pi$).
Furthermore, if $\pi$ divides two of the diagonal
coefficients and their cross-term coefficient (e.g., the $X^2$, $Y^2$ 
and $XY$-coefficients), then we scale 
the other variable (e.g., $Z$) by $\pi$ and divide the whole conic equation
by $\pi$.  If $L$ satisfies all these assumptions, we say $L$ is
\emph{minimal with respect to scaling transformations}, or for short, \emph{scale minimal}.
At each stage in our algorithm, we will assume our conics are scale 
minimal.

Our algorithm to search for a reduced minimal model for $L$ consists
of constructing a search tree of $\Q(t_1, t_2)$-equivalent conics.  At each
stage, three possible types of minim{\iz}ation operations are allowed:

\begin{enumerate}[label=(M\arabic*)]
\item \label{min1}
  minim{\iz}ation of the degree of $L$ (i.e., minim{\iz}ation ``at infinity''),

\item \label{min2}
  minim{\iz}ation at a rational factor from the discriminant, and

\item \label{min3}
  minim{\iz}ation at a polynomial factor from the discriminant.
\end{enumerate} 

There are two immediate issues. First, for general conics it may not be easy to determine when we have found a minimal form, but in our situation we employ the notion of reduced minimal as in \cref{rem:redmin}.
Second, this search tree may be infinite, since removing factors from the discriminant
can introduce other factors (as discussed in \Cref{sec:conic-minim{\iz}ation-algo}).  To address the
second issue, we place some restrictions on our search process which are based
on observations made after performing several minim{\iz}ations ``by hand.''

In particular, we observed:

\begin{enumerate}[label=(P\arabic*)]
\item \label{princ1}
Minim{\iz}ing conics tends to be easier when the rational part of the
discriminant is small.

\item  \label{princ2}
  Minim{\iz}ing conics tends to be easier when the sum of the
diagonal coefficient degrees is close to the discriminant degree.

\end{enumerate}

For a scale minimal conic $L$, let $\Delta(L) \in \Z[t_1, t_2]$ be
the discriminant.  We define $\Delta_\Q(L)$ to be the rational part of $\Delta(L)$, i.e.,
the content of $\Delta(L)$.  Write $\Delta(L) = \Delta_\Q(L) \prod \pi_i^{e_i}$,
where the $\pi_i \in \Z[t_1, t_2]$ are coprime irreducible polynomials.
Let $\Delta_1(L) = \prod_{e_i = 1} \pi_i$ be the ``power-free part'' of 
$\Delta(L)$, and $\Delta_2(L) = \prod_{e_i > 1} \pi_i^{e_i}$ be the power-full
part of $\Delta(L)$.  By the diagonal degree sum of $L$, denoted
$\diag \deg L$, we mean the
sum of degrees of the diagonal coefficients.
We define the \emph{degree score} of $L$ to be
\begin{equation*}
  \mathtt{DegScore}(L) := \deg \Delta_2(L) + \diag \deg L - \deg \Delta(L).
\end{equation*}
Then our second observation \ref{princ2} means that we want to work with
conics with low degree score. Note that a degree score of 0 corresponds to having squarefree discriminant and $ \diag \deg L = \deg \Delta(L)$.

\begin{algorithm}[t]
  \caption{{\tt Minim{\iz}ationSearch($L_0$)}: Search for a minimal model for $L_0$ by removing power-full factors from the discriminant. \\
    Input: A conic $L_0 / \Q(t_0, t_1)$. \\
    Output: A minimal model for $L_0$.
  }
  \label{alg:main}
\begin{algorithmic}
  \State {\tt visited} $\gets \{L_0\}$
  \State {\tt queue} $\gets \{L_0\}$
  \While{{\tt queue} is not empty}
    \If{there exists $L_f \in $ {\tt queue} with degree score 0}
      \State \Return $L_f$
    \EndIf
    \State $L \gets $ an element of {\tt queue} with minimal path score
    \State remove $L$ from {\tt queue}
    \State $L \gets $ {\tt RationalMinim{\iz}ation}$(L)$
    \State $L \gets $ {\tt DegreeMinim{\iz}ation}$(L)$
    \If{$L \not\in $ {\tt visited}}
      \State add $L$ to {\tt visited}
      \State add $L$ to {\tt queue}
    \Else
      \For{each irreducible polynomial $\pi \mid \Delta_2(L)$}
        \State $L' \gets $ {\tt PolynomialMinim{\iz}ation}$(L,\pi)$
        \If{$\mathrm{ord}_\pi \Delta(L') < \mathrm{ord}_\pi \Delta(L)$ and $L' \not\in $ {\tt visited}}
          \State add $L'$ to {\tt visited}
          \State add $L'$ to {\tt queue}
        \EndIf
      \EndFor
    \EndIf
  \EndWhile
\State \Return {\tt Fail}
\end{algorithmic}
\end{algorithm}

\subsubsection{The main algorithm}
First we outline our main algorithm, \texttt{Minim{\iz}ationSearch}, which we present in \Cref{alg:main}.
We describe pieces of the
algorithm in more detail later. The \texttt{Minim{\iz}ationSearch} algorithm creates a search tree where
the nodes are transformed conics, and will terminate if it finds a conic with
degree score 0.  The order in which the tree is searched depends on the
path score of each leaf $L$ in the tree. In computer science this type of search is known
as a best-first search. 
In \Cref{sec:scoring} we discuss several options for path scoring, but our default is the average slope score which is essentially 
the average rate of change of the degree score along the path from the
root node $L_0$ to the node $L$.

In general, the output of the algorithm may or may not be minimal, in the sense we have defined above.  However, in our situation, we diagonal{\iz}e the resulting conic to put it into the form given in \cref{prop:pD-is-a-norm}.  When necessary, one can remove norm factors from the last coefficient to reach a reduced minimal form.

\subsubsection{The sub-algorithms}
We now present the various sub-algorithms used in \cref{alg:main}.

Our first sub-algorithm, \cref{alg:ratred}, applies \Cref{alg:desing} to remove as many rational prime factors as possible from the discriminant of a conic $L$ without increasing its diagonal degrees.
The order in which rational factors are removed can make a difference, and in our implementation we minim{\iz}e starting with the largest prime $p$. In practice we observed that this performs better than starting with smaller primes.

Our second sub-algorithm, \cref{alg:degred}, decreases the diagonal degrees of $L$ by minim{\iz}ing at the place at infinity in both the affine patch where $t_1 = 1$ and the affine patch where $t_2 = 1$. For a conic $L$ with coefficients in $\Z[t_1,t_2]$ we define {\tt SwapAffinePatch}$(L, t_i)$ to be the function which homogen{\iz}es the coefficients of $L$ with a transcendental $t_3$, swaps $t_i$ and $t_3$, dehomogen{\iz}es the resulting coefficients over $\Z[t_1,t_2]$, and returns the scale minimal form of the resulting conic.

Our final sub-algorithm, \cref{alg:polyred}, removes an irreducible
polynomial factor $\pi$ from the discriminant of $L$ without increasing the
degree score.
\begin{algorithm}[p]
  \caption{{\tt RationalMinim{\iz}ation(L)}: Minim{\iz}e $L$ at rational primes $p \mid \Delta_\Q(L)$.\\
    Input: A conic $L/\Q(t_1, t_2)$.\\
    Output: A model $L'$ for $L$, obtained by minim{\iz}ing at rational primes subject to the condition that $\diag \deg L' \leq \diag \deg L$.
  }
  \label{alg:ratred}
\begin{algorithmic}
\State $D \gets \Delta_\Q(L)$
\For{$p \mid D$}
  \While{$p^2 \mid \Delta_\Q(L)$}
    \State $L' \gets$ {\tt Minim{\iz}eAtPi}$(L,p)$
    \If{$|\Delta_\Q(L')| < |\Delta_\Q(L)|$ and $\diag \deg L' \le \diag \deg L$}
      \State $L \gets L'$
    \Else
      \State {\tt break}
    \EndIf
  \EndWhile
\EndFor 
\State \Return $L$
\end{algorithmic}
\end{algorithm}

\begin{algorithm}[p]
  \caption{{\tt DegreeMinim{\iz}ation(L)}: Minim{\iz}e $L$ at the place at infinity to decrease its degree.\\
    Input: A conic $L/\Q(t_1, t_2)$.\\
    Output: A model $L'$ for $L$, obtained by minim{\iz}ing at the place at infinity subject to the condition that $\diag \deg L' \leq \diag \deg L$.}
  \label{alg:degred}
  \begin{algorithmic}

    \For{$i \in \{1, 2 \}$}
      \State $L_i \gets $ {\tt SwapAffinePatch}$(L, t_i)$
      \While{$t_i \mid \Delta_2(L_i)$}
        \State $L'_i \gets $ {\tt Minim{\iz}eAtPi}$(L_i,t_i)$
        \If{$\diag \deg L'_i \le \diag \deg L_i$}
          \State $L_i \gets L_i'$
        \Else
          \State {\tt break}
        \EndIf
      \EndWhile
      \State $L_i \gets $ {\tt SwapAffinePatch}$(L_i', t_i)$
    \EndFor
    \State \Return the first element of $( L, L_1, L_2 )$ which minim{\iz}es $\diag \deg$
  \end{algorithmic}
\end{algorithm}

\begin{algorithm}[p]
  \caption{{\tt PolynomialMinim{\iz}ation($L, \pi$)}: minim{\iz}e $L$ at a prime $(\pi) \subset \Q[t_1,t_2]$.\\
    Input: A conic $L/\Q(t_1, t_2)$.\\
    Output: A model $L'$ for $L$, obtained by minim{\iz}ing at the prime $(\pi)$ subject to the condition that $\mathtt{DegScore}(L') \leq \mathtt{DegScore}(L)$.
  }
  \label{alg:polyred}
  \begin{algorithmic}

    \State $L' \gets $ {\tt Minim{\iz}eAtPi}$(L,\pi)$
    \If{$\mathtt{DegScore}(L') \le \mathtt{DegScore}(L)$}
      \State $L \gets L'$
    \EndIf
    \State \Return $L$

  \end{algorithmic}
\end{algorithm}

\begin{rem} \label{rem:main-algo}
  We comment on our implementation of Algorithms~\ref{alg:main}--\ref{alg:polyred}.
  \begin{enumerate}
  \item
    When trying Algorithms~\ref{alg:ratred}--\ref{alg:polyred},
    we will try these algorithms on certain forms of $L$. 
    By \cref{rem:desing-order}, we want the diagonal degrees of $L$ to be increasing, and so we try
    these algorithms on every permutation of $\{ X, Y, Z \}$ such that the
    diagonal degrees are increasing.  The resulting conic can be significantly
    more complicated depending on the permutation used.  To keep the
    number of branches small, we only keep the resulting conic from one of
    these permutations, and it will be one with a minimal degree score.

  \item
    Both \Cref{alg:main} and the sub-algorithms~\ref{alg:ratred}--\ref{alg:polyred} make certain
    choices about the order of our three minim{\iz}ation operations~\ref{min1}--\ref{min3}, and when
    to no longer pursue certain search paths.  In practice, we avoid
    sequences of operations which make the conic worse along the way.
    One can modify these algorithms to include more branches and
    and be less greedy by not fixing the order of minim{\iz}ation operations and 
    by allowing operations which make the conic worse.  While this
    may allow us to find solutions we would not otherwise, in moderately complicated situations that we
    tested this less restrictive search tended
    to take much longer to complete.

  \item
    \label{enum:alg-random}
    One can also random{\iz}e \Cref{alg:main} to help mitigate getting stuck
    in unproductive sections of the search tree.  Namely, with some fixed probability, we choose $L$ uniformly at random among the leaves in the queue, as opposed to choosing one with minimal path score.  This random{\iz}ation sometimes speeds up the search process.

  \item
    \label{enum:rat-min-deg-min-option}
    Sometimes a good choice of polynomial minim{\iz}ation is not immediately
    apparent in the node score.  To help identify such branches, we have
    also implemented a variant of the algorithm where after each polynomial
    minim{\iz}ation, we immediately run {\tt RationalMinim{\iz}ation} and 
    {\tt DegreeMinim{\iz}ation}.  Sometimes this is slightly slower, and sometimes
    it is significantly faster.

  \end{enumerate}
\end{rem}

\subsection{Scoring methods}
\label{sec:scoring}
\cref{alg:main} relies on a path score for each node to choose the next
leaf in the search process.  If we merely used the degree score, our
search would be very slow (and potentially not terminate) when
there is no good search path along a branch that starts with 
minimal degree scores.  The path score (as well as random{\iz}ation) 
provides a balance between a purely greedy search and a
breadth-first search.

First we define the \emph{node score} of a node $L$ to be its degree score
plus the number primes $p \mid \Delta_\Q(L)$ dividing the rational part
of the discriminant.  This modification of the degree score is to account for
principle \ref{princ1}.  With this node score in mind, we consider
the following methods to define a path score.  A lower path score is considered
better.

\begin{itemize}

\item
  \emph{Average slope score.} The path score of $L$ is the difference
  between the node scores of $L$ and the root $L_0$, divided by the number
  of nodes on the path from $L_0$ to $L$.  

\item
  \emph{Penal{\iz}ed node score.} The path score of $L$ is the
  node score plus a penalty which depends on the length of the path.  Let $n$ be the number of nodes on the
  path from the root $L_0$ to $L$, excluding $L$ itself, whose 
  node score is the same as the node score of $L$.  We set the penalty
  to be $\frac{n^2}4$. 

\item
  \emph{Alternating score.}  Alternate the path score between the
  average slope and penal{\iz}ed node score methods.
\end{itemize}

The average slope score measures the rate at
which the node score is decreasing, and prevents the search from
spending too much time along paths where the node score does not improve
much or at all.  The penal{\iz}ed node score is closer to the greedy
approach of only using the degree score (or rather the node score), 
but which, at least temporarily, avoids paths along which the path score does not improve 
at all after a few steps.  The alternating score blends these two approaches.

%
%

\section{The output of \texorpdfstring{\Cref{alg:main}}{Algorithm 5.5}}
\label{sec:results}
For each positive fundamental discriminant $D < 100$, Elkies--Kumar~\cite{Elkies-Kumar} give a rational parametrisation of the Humbert surface $\mathcal{H}_D$, together with a rational function $\lambda_D \in \Q(g,h) \cong \Q(\mathcal{H}_D)$ such that the Hilbert modular surface $Y_{-}(D)$ is birational to the affine surface cut out by the vanishing of $z^2 - \lambda_D$ in $\mathbb{A}^3$. For each such discriminant, we apply \Cref{alg:main} to try to transform the (IC or RM simplified; see \cref{sec:simplifications}) Mestre conic $L_D / \Q(g,h)$ into the form $X^2 - D \lambda_D Y^2 - q_D Z^2$, for some rational function $q_D \in \Q(g,h)$ (such a model is guaranteed to exist by \Cref{prop:pD-is-a-norm}).

For $D < 100$, we first pre-compute a list of ``nice'' changes of
coordinates which minim{\iz}es and reduces a factor of $\Delta(L_D)$, or
the quantities $(A_1, A, B_1, B, B_2)$ of Elkies--Kumar (see \Cref{rem:nice-change-coord}). For each such
change of variables, we run our algorithm for each of our 3 scoring methods
for up to 48 hours.
 We also run the random{\iz}ed version explained in \cref{rem:main-algo}\eqref{enum:alg-random},
taking the random{\iz}ation probability $p = \frac 18$.  

Runtimes of successful cases for the first two
scoring methods are summar{\iz}ed in \cref{tab:runtimes}.
The second and third columns in \cref{tab:runtimes} give the discriminant degrees and
coefficient degrees of the initial Mestre conic $L_D$, as a measure of
complexity.  The next 2 columns list runtimes for the deterministic version
of our algorithms with the average slope score and 
penal{\iz}e node score methods.  The last 2 columns
list average runtimes
(over 5 trials) for the random{\iz}ed version of these 2 scoring methods. When $D=33,53,61$ a change of variables was used and we record
the runtimes using the change of variables that finished fastest for that
scoring method.

Note the runtimes are wall times, not CPU times, so differences of a few
seconds should be considered random noise.
Calculations were run on the OSCAR supercomputer at Brown University.

In \cref{tab:runtimes} bolded runtimes note where one (non-random{\iz}ed) scoring method significantly
outperformed the other.
Note that sometimes the average slope score is much better than the
penal{\iz}ed node score, and sometimes the converse is true.  
An extreme example is $D=44$ which completes in under 12 minutes for
the average slope score but does not finish before the 48-hour timeout
for the penal{\iz}ed node score.  The issue in this case is the penal{\iz}ed slope
method gets stuck on a single minim{\iz}ation computation.  In spite of this, with a suitable initial change of variables it finishes in just over 31 minutes.  Random{\iz}ed versions also terminate, and the variant 
of the algorithm in \cref{rem:main-algo}\eqref{enum:rat-min-deg-min-option} finishes in 11 minutes.
We also tested the alternating scoring method, and found it is faster for $D=61$
(4h\,7m), and while it often performs in between the other two scoring methods,
it is much worse when $D = 24, 33, 37$.

\begin{rem}[``Nice'' changes of coordinates]
  \label{rem:nice-change-coord}
  In some cases (namely when $D=33$, $53$, and $61$) we first needed to apply a projective linear change of coordinates to the model for $\mathcal{H}_D$ for our algorithm to successfully terminate. We arrive at several changes of coordinates using the \texttt{Magma} function \texttt{MinRedTernaryForm} developed by Elsenhans--Stoll~\cite{ElsenhansStoll_min}, and our own more naive reduction algorithms (which simply place the most singular points of a plane curve at infinity).
\end{rem}

Often it happens that, for a given $D$, one random instance
may result in a significant speed-up, but another random instance runs much
slower, so on average no time is saved.  The cases where random{\iz}ation 
usually results in a speed up are often the cases where many steps are required
in the deterministic version (see \cref{tab:steps}).  An extreme case is  $D=33$ using 
the penal{\iz}ed node score. In this case the deterministic version runs in 420 steps and
takes about 5 times a long as a typical random instance.
For $D = 44, 53, 61$, of 5 random{\iz}ed trials using the penal{\iz}ed node score, 3, 2 and 1 instances,
respectively, did not finish before the 48-hour timeout.

\begin{table}
  \begin{tabular}{l||c|c||c|c||c|c}
    $D$ & $\deg \Delta_{L_D}$ & $\deg L_D$ & slope          & pen.\ node     & rand. slope & rand. node  \\
    \hline
    5   & 11                  & 7          & 1s             &  1s            & 1s          & 1s          \\
    8   & 13                  & 7          & 6s             & 6s             & 6s          & 6s          \\
    12  & 29                  & 15         & 1m\,21s        & {\bf 1m\,1s}   & 1m\,29s     & 1m\,28s     \\
    13  & 20                  & 10         & 1m\,25s        & 1m\,29s        & 1m\,29s     & 1m\,30s     \\
    17  & 32                  & 16         & 48s            & {\bf 29s}      &  1m\,1s     & 30s         \\
    21  & 38                  & 16         & 2m\,24s        & {\bf 2m\,3s}   & { 2m\,9s}   & 2m\,12s     \\
    24  & 43                  & 18         & {\bf 19s}      & 28s            &  24s        &  43s        \\
    28  & 45                  & 17         &  2h\,39m       & {\bf 15m\,30s} & { 2h\,18m}  & 18m\,27s    \\
    29  & 40                  & 18         & {\bf 1m\,52s}  & 3m\,1s         & 2m\,10s     &  2m\,50s    \\
    33* & 50                  & 26         & {\bf 23m\,6s}  & 1h\,22m        & 30m\,46s    & { 15m\,55s} \\
    37  & 48                  & 18         & {\bf 12m\,21s} & 49m\,20s       & 12m\,48s    & { 40m\,16s} \\
    44  & 96                  & 42         & {\bf 11m\,42s} & $>$2d          & 13m\,4s     & $>$28h      \\
    53* & 70                  & 30         & {\bf 13m\,32s} & 2h\,50m        & 58m\,50s    & $>$20h      \\
    61* & 78                  & 32         & {\bf 4h\,41m}  & 7h\,49m        & { 4h\,10m}  &  $>$13h
  \end{tabular}
\caption{Approximate runtimes for minim{\iz}ing $L_D$ with different scoring functions. Starred discriminants required initial change of variables and bold values correspond to cases where one scoring method significantly outperformed another.}
\label{tab:runtimes}
\end{table}

\begin{table}
\begin{tabular}{l|c|c||c|c||c|c}
& \multicolumn{2}{c||}{minim{\iz}ation primes} & \multicolumn{2}{c||}{slope score}
& \multicolumn{2}{c}{pen.\ node score} \\ 
\hline
$D$  & $\# \{ \frakp^2 \mid \Delta_L \}$ & $\sum \lfloor \frac{v_\frakp(\Delta_L)}2 \rfloor$ & steps &  depth & steps & depth \\
\hline
5 & 2 & 2 & 2 & 2 & 2 & 2 \\
8 & 3 & 3 & 4 & 4 & 4 & 4  \\
12 & 5  & 5 & 14 & 8 & {\bf 9} & 8 \\
13 & 4 & 4 & 8 & 7 & {\bf 7} & 7 \\
17 & 5 & 6 & 22 & 10 & {\bf 11} & 10 \\
21 & 6 & 8 & 19 & 12 & {\bf 13} & {\bf 10} \\
24 & 8 & 9 & {\bf 14} & 12 & 51 & 12 \\
28 & 8  & 11 & 1053 &  21 & {\bf 56} & 21 \\
29 & 7 & 9 & {\bf 16} & {\bf 9} & 24 &  15  \\
33* & 8 & 11 & {\bf 72} & {\bf 18} & 420  & 23 \\
37 & 8 & 10 & {\bf 90} & 17 & 241 & 17 \\
44 & 10 & 12 & {\bf 74} & {\bf 24} &  --- & --- \\
53* & 10 & 13 & 56 & {\bf 19} & 56 & 20 \\
61* & 11 & 15 & 413 & {\bf 21} & {\bf 396} & 22 \\
\end{tabular}
\caption{Numbers of steps to minim{\iz}e $L_D$ and depth of solutions. Starred discriminants required an initial change of variables and bold values correspond to cases where one scoring method outperforms another.}
\label{tab:steps}
\end{table}

To get a better sense of how difficult it was to find a sequence of transformations
to minim{\iz}e $L_D$, compare with \cref{tab:steps}.  The second
column lists the number of polynomial factors of $\Delta_L$ occurring to at
least a square power, i.e., the number of prime ideals $\frakp$ in $\Q[g,h]$ where 
one needs to minim{\iz}e.  Sometimes these factors occur to higher powers, and
the third column counts these with appropriate multiplicity.  In particular, the third
column in \cref{tab:steps} tells us the smallest number of minim{\iz}ation steps we expect to need to perform to carry out the minim{\iz}ation completely. 
The fourth column reports, when using the average slope score, 
the number of steps (i.e., number of times the {\tt while} loop is iterated) 
required to complete {\tt Minim{\iz}ationSearch}.  The fifth column, again
for average slope score, reports the depth of the final solution in the search tree. 
The last two columns report analogous data when using the penal{\iz}ed
node score.  

We note that runtimes and number of steps required are not perfectly
correlated, as certain minim{\iz}ation steps take much longer to run than others
(e.g., compare the average slope with penal{\iz}ed node scores for $D=33$ or $D=61$).
We expect that the best search paths typically avoid the most 
intensive minim{\iz}ation calculations.  Thus, using automated
timeouts in the search process would likely increase efficiency (we have not implemented this).

It is also interesting to note that for most $D \ge 29$, the two 
scoring methods are finding different paths to the minimal model (the depths
are typically different).

\section{Models for genus 2 curves with RM}
\label{sec:proof-main-results}

Using the transformations computed as described in \Cref{sec:results} we now prove our main results about generic models for genus $2$ curves with RM $D$.

\subsection{Proofs of \texorpdfstring{Theorems~\ref{thm:models} and \ref{thm:pDs}}{Theorems 1.1 and 1.5}}

\begin{proof}[Proof of \Cref{thm:pDs}]
  The polynomials $q_D \in \Q[g,h]$ are computed by applying the transformations calculated using \Cref{alg:main} to the generic Mestre conic $L_D$ which is defined over $\Q(\mathcal{H}_D) \cong \Q(g,h)$. These transformations are too complicated to reproduce here, but are stored in the electronic data accompanying this article at~\cite{electronic}.

  To determine the analogous polynomials $p_D$, we convert to Elkies--Kumar $(m,n)$-coordinates and again apply \Cref{alg:main}.  These transformations are also recorded  at~\cite{electronic}.
\end{proof}

Let $M_D / \Q(Y_{-}(D))$ be the Mestre cubic defined in \Cref{sec:mestre-conic} associated to the generic point on the Hilbert modular surface $Y_{-}(D)$. Recall that we write $\widetilde{L}_D / \Q(Y_{-}(D))$ for the transformed Mestre conic given by $X^2 - DY^2 - q_DZ^2 = 0$.

To deduce \Cref{thm:models} it remains to apply Mestre's result (\Cref{prop:conic-cubic}) to $\widetilde{L}_D$ and the corresponding transformed cubic. Note that $\widetilde{L}_D$ may not have a point (except when $D \equiv 1 \pmod{8}$, see \Cref{thm:qD=1}). To overcome this, we note that a point on the threefold $\mathscr{L}_D$ defined in \Cref{thm:models} allows us to parametr{\iz}e the conic $\widetilde{L}_D$, and hence recover the Weierstrass models in \Cref{thm:models} via \Cref{prop:conic-cubic}.

\begin{proof}[Proof of \Cref{thm:models}]
  Let $\widetilde{R}_D(X,Y,Z) \in \Q(Y_{-}(D))[X,Y,Z]$ be the homogeneous cubic form which defines the cubic curve obtained by applying the transformations stored in \cite{electronic} to the Mestre cubic $M_D$.

  Consider a $\Q(\mathscr{L}_D)$-rational parametr{\iz}ation $\mathbb{A}^1 \dashrightarrow \widetilde{L}_D$ given by $x \mapsto [\eta_0 : \eta_1 : \eta_2]$ for some polynomials $\eta_i \in \Q(\mathscr{L}_D)[x]$. Our choice of parametrisation was computed by stereographic projection away from the point $(r, s, 1) \in \widetilde{L}_D(\Q(\mathscr{L}_D))$ and is recorded in \cite{electronic}.

  It is simple to check using computer algebra (e.g., \texttt{Magma}) that $\widetilde{R}_D(\eta_0, \eta_1, \eta_2)$ is equal to $F_D(z,g,h,r,s; x)$ up to a constant factor in $\Q(\mathscr{L}_D)$. The claim in (i) follows immediately from \Cref{prop:conic-cubic}, together with \cite[Proposition~2.1]{CM:RM5}.

  For (ii), note that $(g,h)$ are coordinates for the Elkies--Kumar model for the Humbert surface $\mathcal{H}_D$ (which is birational to a subvariety of $\mathcal{M}_2$).
\end{proof}

\subsection{Models when \texorpdfstring{$Y_-(D)$}{Y{\textunderscore}(D)} is rational} \label{sec:3-param-models}
As described in the introduction, when $D = 5$, $8$, $12$, $13$, and $17$ the threefold $\mathscr{L}_D$ is rational, which we now prove. In particular, we give generic families of genus $2$ curves with RM $D$ in three parameters with no relations. When $D=5$ this model is given in \cite[Remark 6.2]{CM:RM5}.

\begin{cor}
  \label{thm:RMrat}
  Let $k$ be a field of characteristic $0$. For each $D \in \{5,8,12,13,17\}$ and $a,b,c \in k$ consider the degree $6$ polynomial $f_{D}(a,b,c;x) \in k[x]$ recorded in \cite{electronic}. If $f_{D}(a,b,c;x)$ has no repeated roots, then the Jacobian $J$ of the genus $2$ curve $C/k$ with Weierstrass equation $C : y^2 = f_{D}(a,b,c;x)$ has RM $D$ over $\bar k$. Moreover if $\End_{\overbar k}(J)$ is abelian, then the RM is defined over $k$.
  \end{cor}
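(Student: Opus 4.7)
The plan is to reduce to \Cref{thm:models} via an explicit rational parametrisation of the threefold $\mathscr{L}_D$. By \Cref{thm:pDs}, when $D \in \{5, 8, 12, 13, 17\}$ the surface $Y_-(D)$ is rational with coordinates $(m,n)$ and the Mestre conic becomes $X^2 - D Y^2 - p_D(m,n) Z^2 = 0$, where $p_D$ is recorded in \cref{tab:pD}. Pulling back the $5$-parameter, $2$-relation family of \Cref{thm:models} along $(m,n) \mapsto (z,g,h)$ collapses the relation $z^2 - \lambda_D = 0$, so $\mathscr{L}_D$ is birational to the conic bundle $\mathscr{C}_D \to \mathbb{A}^2_{(m,n)}$ defined by $X^2 - DY^2 - p_D Z^2 = 0$.

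The next step is to exhibit an explicit $\Q$-rational section of $\mathscr{C}_D \to \mathbb{A}^2$ for each $D$. For $D = 17$ this is trivial since $p_{17} = 1$ gives the point $(1:0:1)$. For $D = 5, 8, 12, 13$, one searches directly for a point $(X_0(m,n), Y_0(m,n), Z_0(m,n))$ with entries in $\Q(m,n)$ satisfying the conic equation; in each case this can be done by inspection (e.g., setting one coordinate to zero and solving a low-degree Diophantine equation in $\Q[m,n]$, or by specialising one of $m$ or $n$). Once a section exists, the conic bundle $\mathscr{C}_D$ admits a rational parametrisation by $\mathbb{A}^3$: we project from the section to parametrise each fibre of $\mathscr{C}_D \to \mathbb{A}^2$ by $\PP^1$, giving a birational map $\mathbb{A}^3_{(a,b,c)} \dashrightarrow \mathscr{C}_D$ where $(a,b)$ parametrise the base $\mathbb{A}^2$ and $c$ parametrises the fibre.

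Composing this with the birational equivalence $\mathscr{C}_D \dashrightarrow \mathscr{L}_D$ and substituting into the sextic $F_D(z,g,h,r,s;x)$ from \Cref{thm:models} yields a polynomial $f_D(a,b,c;x) \in \Q(a,b,c)[x]$, recorded in \cite{electronic}. For any $(a,b,c) \in k^3$ such that $f_D(a,b,c;x)$ has no repeated roots, the construction produces a point $(z,g,h,r,s) \in \mathscr{L}_D(k)$ whose Weierstrass equation $y^2 = F_D(z,g,h,r,s;x)$ agrees with $y^2 = f_D(a,b,c;x)$. \Cref{thm:models}\ref{iii:gen-fam} then implies that $\Jac(C)$ has RM $D$ over $\overbar k$, and \cite[Proposition~2.1]{CM:RM5} gives the descent of the RM to $k$ whenever $\End_{\overbar k}(J)$ is abelian.

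Finally, genericity follows because the composite map $\mathbb{A}^3 \dashrightarrow \mathscr{L}_D \dashrightarrow Y_-(D)$ is dominant (the first map is birational and the second is the structure map realising the moduli interpretation), so the image in the moduli space of genus $2$ curves with RM $D$ is Zariski dense. The main obstacle is essentially organisational: locating rational sections of $\mathscr{C}_D$ whose expressions are simple enough that the resulting $f_D(a,b,c;x)$ remains tractable, as witnessed by the compact formulas displayed in \Cref{thm:RM12} and \Cref{thm:RM17}.
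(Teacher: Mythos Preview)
Your proposal is correct in spirit and follows essentially the same strategy as the paper: exhibit a rational parametrisation of the relevant Mestre conic bundle and then substitute into Mestre's cubic. There are, however, two points where your route diverges from the paper's.

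First, the paper does not pass back through the $(z,g,h,r,s)$-model $\mathscr{L}_D$ and the sextic $F_D$ of \Cref{thm:models}. Instead it works directly over $\Q(m,n)$: it takes the transformed cubic $\widetilde{R}_D'(X,Y,Z) \in \Q(m,n)[X,Y,Z]$ (the $(m,n)$-analogue of the one used in the proof of \Cref{thm:models}), parametrises the conic $\widetilde{L}_D' : X^2 - DY^2 - p_D Z^2 = 0$ over $\Q(a,b,c)$, and checks with computer algebra that $\widetilde{R}_D'(\nu_0,\nu_1,\nu_2)$ agrees with $f_D(a,b,c;x)$ up to a constant. The conclusion then comes from \Cref{prop:conic-cubic} directly rather than via \Cref{thm:models}. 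Your detour through $F_D$ is valid but adds an unnecessary change of coordinates.

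Second, your description of the parametrisation of $\mathscr{C}_D$ as ``$(a,b)$ parametrise the base and $c$ the fibre'' is not what the paper actually does. The explicit maps $(a,b,c) \mapsto (m,n)$ recorded in \Cref{prop:when-isom} show that all three parameters mix nontrivially into $(m,n)$; the paper simply records a birational parametrisation $(a,b,c) \mapsto (m,n,r,s)$ of the threefold $\mathscr{L}_D' : r^2 - Ds^2 - p_D = 0$ found computationally, without committing to the section-plus-projection form you propose. Your approach would still work (the existence of a $\Q(m,n)$-section is equivalent to rationality of the bundle), but finding such a section ``by inspection'' is more optimistic than the examples warrant, and would in general yield different---and possibly less compact---polynomials $f_D$.
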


\cref{thm:RM12} and \cref{thm:RM17}  explicate the $D=12$ and $D=17$
cases of \cref{thm:RMrat}.  
When $D=8$ we have $f_{8}(a,b,c;x) = \Nm_{L/K} \poly(x)$, where $L = k[r]/\xi(r)$, 
\begin{equation*}
  \xi(r)   =  (-a^2 + 2b^2 - 1) r^3 - 3c r^2 + (4a^4 - 16a^2b^2 + 2a^2 + 16b^4 - 4b^2 - 2c^2 - 2) r - 2c,
\end{equation*}
and
\begin{flalign*}
  \poly(x) = &\,\, (2 ( 2 b - 1 ) ( a^{2} - 2 b^{2} + 1 )r^2 + 4 c ( a^{2} - 2 b^{2} + 2 b - 1 )r - 4 ( 4 a^{4} b - 2 a^{4} + 2 a^{3} c            & \\
             &- 16 a^{2} b^{3} + 8 a^{2} b^{2} + 2 a^{2} b - a^{2} - 4 a b^{2} c + 16 b^{5} - 8 b^{4} - 4 b^{3} + 2 b^{2} - 2 b + 1 ) x^2          & \\
             &+4 ( a ( a^{2} - 2 b^{2} + 1 )r^2 + 2 ac r -  2 ( 2 a^{5} - 8 a^{3} b^{2} + a^{3} + 2 a^{2} b c + 8 a b^{4} - 2 a b^{2} - a          & \\
             &- 4 b^{3} c )) x + ( 2 b + 1 ) ( a^{2} - 2 b^{2} + 1 )r^2 -  2 c ( a^{2} - 2 b^{2} - 2 b - 1 )r -  2 ( 4 a^{4} b + 2 a^{4}           & \\
             &+ 2 a^{3} c - 16 a^{2} b^{3} - 8 a^{2} b^{2} + 2 a^{2} b + a^{2} - 4 a b^{2} c + 16 b^{5} + 8 b^{4} - 4 b^{3} - 2 b^{2} - 2 b - 1 ). &
\end{flalign*}

\begin{rem}
  The concise presentations given for $D=8$ and $D=12$ (with the Weierstrass sextic being given as a norm from a cubic {\'e}tale $k$-algebra) is a general phenomenon for genus $2$ curves whose Jacobians admit a Richelot isogeny (see e.g., \cite[Lemma~4.1]{Bruin-Doerksen}). In particular, every genus $2$ curve with RM by an order of discriminant $D \equiv 0 \pmod{2}$ admits a model of this form.

  Similarly the simple presentation in \Cref{thm:RM17} follows from \Cref{lemma:1-mod-8-factor}. In contrast, we do not know of any simpler presentation when $D \equiv 5 \pmod{8}$. It would be interesting to simplify the models $C : y^2 = f_D(a,b,c;x)$ which we give when $D = 5, 13$ (possibly by developing a satisfactory algorithm for minim{\iz}ing a coupled conic-cubic pair in $\mathbb{P}^2$).
\end{rem}

\begin{proof}[Proof of \cref{thm:RMrat}]
  This follows from \Cref{thm:pDs}, analogously to \Cref{thm:models}. 
  Let $\widetilde{L}_D' : X^2 - DY^2 - p_DZ^2 = 0$, where $p_D$ is as in \cref{tab:pD},
  and let $\widetilde{R}_D'(X,Y,Z) \in \Q(m,n)[X,Y,Z]$ be the associated cubic form (see \cite{electronic}).
  Let $\mathscr{L}_D' : r^2 - Ds^2 - p_D = 0$ be the Mestre conic bundle over $\mathbb A^2_{m,n}$.

  In the electronic data associated to this article \cite{electronic} we record rational parametr{\iz}ations $(a,b,c) \mapsto (m,n,r,s)$ of the threefolds $\mathscr{L}_D'$ for each $D \leq 17$.

  Let $x \mapsto [\nu_0: \nu_1 : \nu_2]$ be the $\Q(a,b,c)$-rational parametrisation of  $\widetilde{L}_D'$ recorded in \cite{electronic}. When $D = 17$ such a parametrisation is given over $\Q(a,b)$. It is simple to check with computer algebra that $\widetilde{R}_D'(\nu_0, \nu_1, \nu_2)$ is equal to $f_D(a,b,c;x)$ up to a constant factor, and the claims follow from \Cref{prop:conic-cubic}.
\end{proof}  
Indeed, the proof of \cref{thm:RMrat} also allows us to recall the forgetful maps from the parameter space to the Humbert surface $\mathcal{H}_D$, and we immediately deduce the following.

\begin{prop}
  \label{prop:when-isom}
  For $D \in \{5, 8, 12, 13, 17 \}$, let $f_D(a,b,c; x) \in \Z[a,b,c][x]$ be the polynomials defined in \cref{thm:RMrat} (and it is understood that $c$ plays no role when $D = 17$). The natural map from the parameter space $\mathbb{A}^3$ to the Humbert surface $\mathcal{H}_D$ which associates to a point $(a,b,c) \in \mathbb{A}^3$ the genus $2$ curve $C : y^2 = f_D(a,b,c;x)$ is given by
  \begingroup
  \allowdisplaybreaks
  \begin{flalign*}
    \mathrlap{\,\,(g,h)}\hphantom{(m,n)} &=
    \begin{dcases}
      \left(
      \frac{m^2 - 5n^2 - 9}{30},
      \frac{25(m+5)n^4 -5 \beta n^2 + (m + 3)^3(m - 2)^2}{12500}
      \right)
      & \text{if $D = 5$,} \\
      \mathrlap{
          \left(
          \frac{  ( m - 1 ) ( m + 1 )}{ 16 ( 2 n^{2} - 1 )},
          \frac{-32n^4 + 8(2m^2 + 7m + 9)n^2 - (m+3)^3}{ 16 ( 2 n^{2} - 1 ) ( m + 1 )}
          \right)
      }
      \hphantom{
        \left(
        \frac{8 (13 b^2 - a^2 + c^2 - 6 c - 3)}{a^2 - 13b^2 - c^2 + 3c - 3}, 
        \frac{24 (5a^2 - 65b^2 - 31c^2 - 34c - 9)}{a^2 - 13 b^2 - c^2 + 3 c - 3}
        \right)
      }
      & \text{if $D = 8$,}  \\[2mm]
      \left(
      \frac{2(m-1)(m+1)(m^2n+9m^2-8)}{27m^2- n^2 - 27},
      m
      \right)
      & \text{if $D = 12$,} \\[2mm]
      \begin{aligned}
        \bigg(
        &\frac{2(m^3 + 150m^2 - 6(n - 44)m - 16(9n + 4))}{9 (n^2 - 12m^3 + 3m^2)},\\
        &\frac{267m^3 - 24 (3 n - 148)m^2 + (n^2 - 1440n - 768)m + 128 n^2}{54(n^2 - 12 m^3 + 3 m^2)}
        \bigg)        
      \end{aligned}
      & \text{if $D = 13$,} \\[2mm]
      \left(
      \frac{\gamma}{3},
      \frac{(5 \gamma + 9) a + 6(\gamma + 1)}{18(2a - 3)}
      \right)
      & \text{if $D = 17$} \\[2mm]
    \end{dcases}&
  \end{flalign*}
  where
  \begin{equation*}
    \beta = 2m^3 + 10m^2 - 5m - 45,
  \end{equation*}
  \begin{equation*}
    \gamma = \frac{ - 3 ( 4 a^{3} - 6 a^{2} b + 12 a^{2} + 2 a b^{2} + 5 a b + 7 a - 3 b^{2} + 6 b + 1 )}{ 2 ( 4 a^{3} - 4 a^{2} b + 12 a^{2} + 2 a b^{2} + 9 a - 3 b^{2} + 9b )},
  \end{equation*}
  and

  \begin{flalign*}
      (m,n) &=
      \begin{dcases}
        \left(\frac{2(5a^2 + 5ac + b^2 - 5c^2 + 1)}{5a^2 - b^2 + 5c^2 - 1} , \frac{-b(4a + 2c)}{5a^2 - b^2 + 5c^2 - 1} \right)
        & \text{if $D = 5$,} \\
        \left( \frac{-2a^2 + 4b^2 + c^2 - 2}{2(a^2 - 2b^2)}, \frac{-c}{2} \right)
        & \text{if $D = 8$,} \\
        \left( \frac{-a^2 + 3b^2 + 3c^2 + 1}{a^2 - 3b^2 + 3c^2 - 1}, \frac{18c}{a^2 - 3b^2 + 3c^2 - 1} \right)
        & \text{if $D = 12$,} \\
        \left(
        \frac{8 (13 b^2 - a^2 + c^2 - 6 c - 3)}{a^2 - 13b^2 - c^2 + 3c - 3}, 
        \frac{24 (5a^2 - 65b^2 - 31c^2 - 34c - 9)}{a^2 - 13 b^2 - c^2 + 3 c - 3}
        \right)
        & \text{if $D = 13$.} \\
      \end{dcases}&
  \end{flalign*}
  \endgroup
  In particular, for a generic pair $P, P' \in \mathbb{A}^3(k)$ the associated genus $2$ curves are isomorphic over $\overbar{k}$ if and only if the image of $P$ and $P'$ under the above map are equal.
\end{prop}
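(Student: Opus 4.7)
The plan is to deduce this directly from \Cref{thm:models}(ii), combined with the explicit rational parametrizations constructed in the proof of \Cref{thm:RMrat}. By \Cref{thm:models}(ii), two generic parameter tuples in our family give $\bar k$-isomorphic genus $2$ curves if and only if they have the same image on the Humbert surface $\mathcal{H}_D$ under the forgetful map to $\mathcal{M}_2$ (using Elkies--Kumar coordinates $(g,h)$). The proposition then amounts to making this forgetful map completely explicit.

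For $D \in \{5,8,12,13\}$, I would factor the map $\mathbb{A}^3 \dashrightarrow \mathcal{H}_D$ through $Y_-(D)$ as
\[
\mathbb{A}^3 \dashrightarrow \mathscr{L}_D' \to Y_-(D) \to \mathcal{H}_D.
\]
The first arrow is the parametrization $(a,b,c) \mapsto (m,n,r,s)$ of the Mestre conic bundle $\mathscr{L}_D'$ recorded in \cite{electronic} as part of the proof of \Cref{thm:RMrat}; the second is the projection $(m,n,r,s) \mapsto (m,n)$ onto the Elkies--Kumar coordinates of $Y_-(D)$; and the third is the Elkies--Kumar forgetful map $Y_-(D) \to \mathcal{H}_D$ given by an explicit pair of rational functions in $(m,n)$ by Elkies and Kumar \cite{Elkies-Kumar}. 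Reading off the first two coordinates of the parametrization of $\mathscr{L}_D'$ yields the listed formulas for $(m,n)$ in terms of $(a,b,c)$, while the listed formulas for $(g,h)$ in terms of $(m,n)$ are precisely those of Elkies--Kumar.

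For $D = 17$ the situation simplifies because $q_{17} = p_{17} = 1$ (see \Cref{thm:qD=1}), so the conic bundle $\mathscr{L}_{17}'$ is trivial and the parametrization from \Cref{thm:RMrat} already depends on only two parameters $(a,b)$. Substituting into the Elkies--Kumar forgetful map $Y_-(17) \to \mathcal{H}_{17}$ then yields the formulas in terms of $(a,b)$; the auxiliary quantity $\gamma$ is just a compact rewriting of the value of $3g$.

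The main obstacle is purely computational: the parametrizations in \cite{electronic} and the Elkies--Kumar formulas both involve polynomials with large coefficients, and one must verify in a computer algebra system such as \texttt{Magma} that the compositions simplify to the expressions listed in the statement (in particular that the expected common factors cancel). Given the formulas in the statement, this verification is routine, and no new theoretical input beyond what was used to establish Theorems~\ref{thm:models} and~\ref{thm:RMrat} is required. The final ``if and only if'' clause of the proposition is then an immediate consequence of \Cref{thm:models}(ii), since generic $\bar k$-isomorphism of curves in the family is governed by their image in $\mathcal{H}_D$.
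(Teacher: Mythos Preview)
Your proposal is correct and follows essentially the same approach as the paper: the paper states that the proposition is deduced immediately from the proof of \Cref{thm:RMrat} by reading off the forgetful maps from the explicit parametrisations recorded in \cite{electronic} composed with the Elkies--Kumar map $Y_-(D) \to \mathcal{H}_D$, and then invoking \Cref{thm:models}(ii) for the final clause. Your factorisation $\mathbb{A}^3 \dashrightarrow \mathscr{L}_D' \to Y_-(D) \to \mathcal{H}_D$ makes this explicit, and the computational verification you describe is exactly what is needed.
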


\section{Relation to known families}
\label{sec:relat-prev-work}

\subsection{Discriminant \texorpdfstring{$8$}{8}}
\label{sec:prev-work-discr-8}
Bending \cite{Bending_thesis,Bending_paper} gave a versal family of genus $2$ curves $C/k$ with RM 8 by analysing the Richelot isogeny $\sqrt{2} \colon \Jac(C) \to \Jac(C)$. This family is given in terms of three parameters $A,P,Q$. This construction therefore induces a dominant rational map $\mathbb{A}^3 \dashrightarrow Y_{-}(8)$. By interpolating triples of $A,P,Q$ it is simple to recover this rational map in terms of Elkies--Kumar's~\cite{Elkies-Kumar} model for $Y_{-}(8)$. The rational map $\mathbb{A}^3 \dashrightarrow Y_{-}(8)$ is given by taking $(A,P,Q) \mapsto (m,n)$.
\begin{equation*}
  m = \frac{f_1(A,P,Q)}{f_3(A,P,Q) f_4(A,P,Q)^2} \qquad \text{and} \qquad n = \frac{-f_2(A,P,Q)}{f_3(A,P,Q) f_4(A,P,Q)}.
\end{equation*}
Here, the polynomials $f_i(A,P,Q) \in \Z[A,P,Q]$ have degrees $20$, $15$, $10$, and $5$ for each $i = 1, 2, 3, 4$ respectively. The precise formulae are too complicated to reproduce here, but we include them in the electronic data associated to this article~\cite{electronic}.

\subsection{Discriminant \texorpdfstring{$12$}{12}}
\label{sec:prev-work-discr-12}
Denote by $Y_{-}(12)[\sqrt{3}]$ the Hilbert modular surface of discriminant $12$ with \emph{full $\sqrt{3}$-level structure}. That is, $Y_{-}(12)[\sqrt{3}]$ is the surface whose $k$-points parametrise genus 2 Jacobians $J/k$ with an RM 12 action $\iota \colon \mathcal O_{12} \to \End_k^\dagger(J)$, such that $\ker \iota(\sqrt{3})$ is contained in $J(k)$.

Bruin, Flynn, and Shnidman~\cite{Bruin-Flynn-Shnidman} computed an explicit rational parametrisation $\PP^2 \dashrightarrow Y_{-}(12)[\sqrt{3}]$ over $\Q$ and gave formulae for the generic genus $2$ curve. By forgetting the level structure we obtain a natural forgetful morphism $Y_{-}(12)[\sqrt{3}] \to Y_{-}(12)$. This forgetful map is given in the models of Elkies--Kumar~\cite{Elkies-Kumar} and Bruin--Flynn--Shnidman~\cite{Bruin-Flynn-Shnidman} by $[a:b:c] \mapsto (m,n)$ where
\begin{equation*}
  m = \frac{  ( a - c ) ( a^{2} + a b - 4 a c + b^{2} + b c + c^{2} ) F_1(a,b,c)}{F_2(a,b,c) } \quad \text{and} \quad n = \frac{ -F_3(a,b,c)}{ ( a - c )^{2} F_2(a,b,c)}.
\end{equation*}
Here the homogeneous polynomials $F_i(a,b,c) \in \Z[a,b,c]$ are given by
\begin{flalign*}
  F_1(a,b,c) &= a^{4} - a^{3} b - 8 a^{3} c + 3 a^{2} b c + 18 a^{2} c^{2} - a b^{3} + 6 a b^{2} c + 3 a b c^{2} - 8 a c^{3} + b^{4} - b^{3} c &\\
             &\quad- b c^{3} + c^{4},&\\
  F_2(a,b,c) &= a^{7} - 11 a^{6} c + 39 a^{5} c^{2} - 2 a^{4} b^{3} - 37 a^{4} c^{3} + 10 a^{3} b^{3} c - 37 a^{3} c^{4} - 30 a^{2} b^{3} c^{2} &\\
             &\quad+ 39 a^{2} c^{5} + a b^{6} + 10 a b^{3} c^{3} - 11 a c^{6} + b^{6} c - 2 b^{3} c^{4} + c^{7},&\\
  \intertext{and}
  F_3(a,b,c) &=  a^{9} - 45 a^{8} c + 414 a^{7} c^{2} - 3 a^{6} b^{3} - 1374 a^{6} c^{3} + 36 a^{5} b^{3} c + 1260 a^{5} c^{4} - 99 a^{4} b^{3} c^{2}& \\
             &\quad+ 1260 a^{4} c^{5} + 3 a^{3} b^{6} - 60 a^{3} b^{3} c^{3} - 1374 a^{3} c^{6} + 9 a^{2} b^{6} c - 99 a^{2} b^{3} c^{4} + 414 a^{2} c^{7}& \\
  &\quad+ 9 a b^{6} c^{2} + 36 a b^{3} c^{5} - 45 a c^{8} - b^{9} + 3 b^{6} c^{3} - 3 b^{3} c^{6} + c^{9}.&
\end{flalign*}

\section{On the polynomials \texorpdfstring{$p_D$ and $q_D$}{p{\textunderscore}D and q{\textunderscore}D}}
\label{sec:qD}

It is natural to ask about the geometric significance of the polynomials $p_D$ and $q_D$, i.e., the curves they cut out in $Y_-(D)$.
Here we present some empirical observations, including a conjecture for $q_{40}$, the first case missing from \cref{tab:qD}.

First consider $p_D$ for $D = 5, 8, 12, 13$, and $17$.  For each such $D$, the polynomial $p_D$ is a factor of the discriminant $\Delta(L_D)$ of the original Mestre conic $L_D$, written in terms of $(m,n)$.  In all of the cases except $D=8$, the polynomial $p_D$ is also a factor of $\lambda_D$, viewed as a polynomial in $m$ and $n$.  In the case $D=17$, where $p_D = 1$, we remark that there is a quadratic factor of both $\Delta(L_D)$ and $\lambda_D$ which is a norm from $\Q(m,n)(\sqrt{17})$.  It is not clear how one might identify \emph{a priori} which factor(s) of $\Delta(L_D)$ should contribute to $p_D$.

For $q_D$, the situation is even more mysterious.  Consider the 12 $q_D$'s in \cref{tab:qD} such that $q_D \ne 1$.  In each such case, $q_D$ is not a factor of the original Mestre conic discriminant $\Delta(L_D)$.  When $D=12$ or $D=44$, the lowest degree factor of $q_D$ is a factor of $\Delta(L_D)$.  It is not even apparent during our minim{\iz}ation steps in \cref{alg:main} what $q_D$ should be---it is often not until we diagonal{\iz}e at the end of our minim{\iz}ation process that $q_D$ appears!

For $D = 5, 8, 12$, or $13$, writing $q_D$ in terms of $(m,n)$ gives a rational function which contains $p_D$ as factor of the numerator ($D = 5, 13$) or denominator ($D = 8, 12$).  At least when $D = 5, 8$, the rational functions $q_D(m,n)$ and $p_D(m,n)$ differ by a norm
from $\Q(m,n)(\sqrt D)$.  For example, $q_5 = -6(10g + 3)(15g + 2) = -(m^2 - 5n^2) (m^2 - 5n^2 - 5)$.  Here the curves $g=-\frac 3{10}$ and $g = -\frac 2{15}$ on $Y_-(5)$ are somewhat special: they have no $\Q$-rational points and do not lie in the image of the map from $(m,n)$ to $(g,h)$ (see \cite[Section 5]{CM:RM5} for details).

Now consider the case $D = 21$. Recall from \cite{Elkies-Kumar} that
\begin{align*}
  \lambda_{21} =\,\, & 189g^6 - 594g^5h + 621g^4h^2 - 216g^3h^3 - 378g^4 + 1116g^3h \\
                     &- 954g^2h^2 + 184gh^3 + 16h^4 + 205g^2 - 522gh + 349h^2 - 16,
\end{align*}
and by \Cref{thm:pDs} we have
\begin{equation*}
  q_{21} = 18g^2 - 12gh - 12h^2 - 14.  
\end{equation*}
In Figure~\ref{fig:D21} we plot the real values $(g,h)$ such that $\lambda_{21} = 0$ (red) and $q_{21} = 0$ (blue).

\begin{figure}[t]
  \centering
  \includegraphics[width=0.5\textwidth]{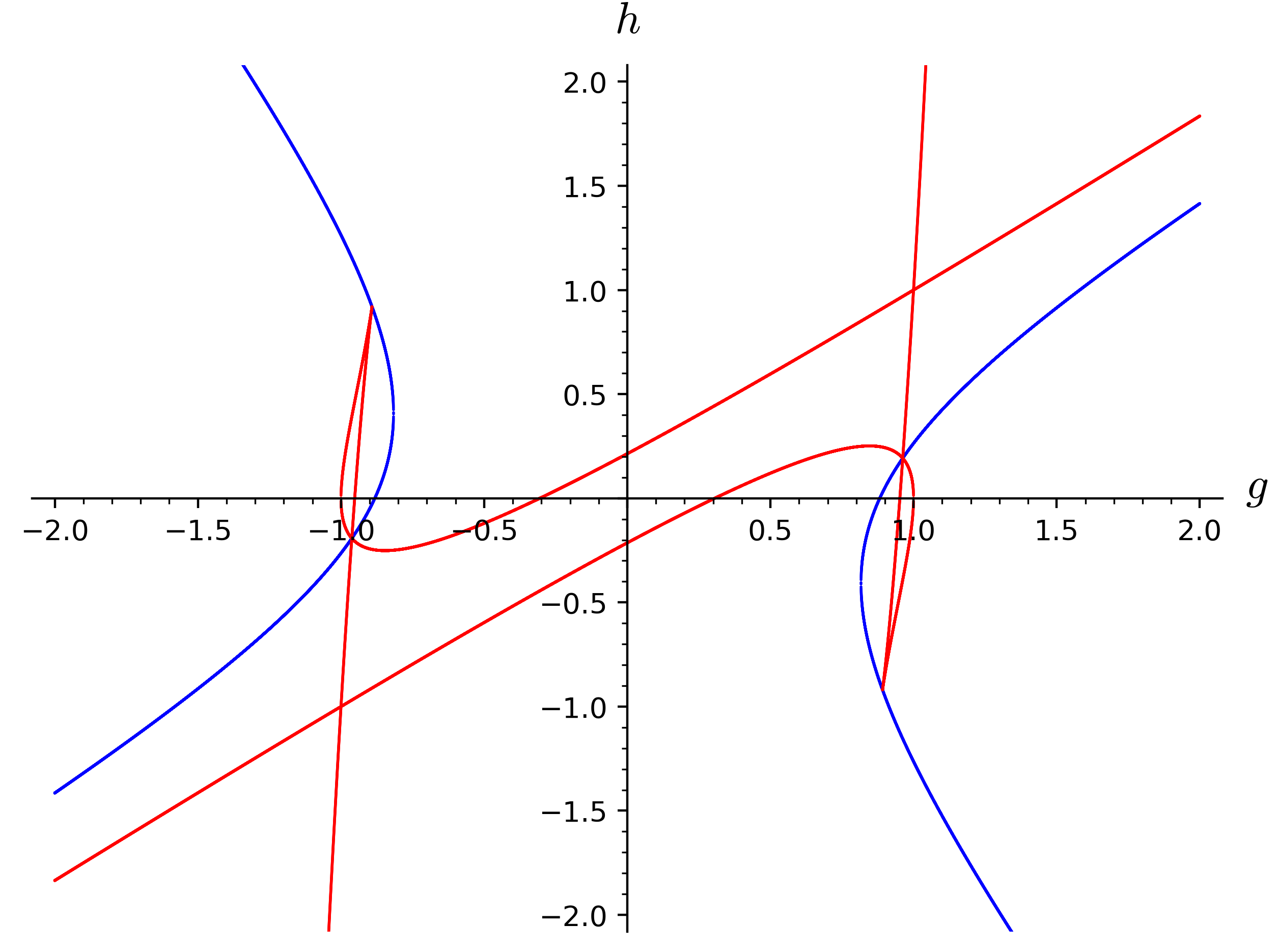}
  \caption{For $D = 21$, the real values $(g,h)$ such that $\lambda_{21} = 0$ (red) and $q_{21} = 0$ (blue).}
  \label{fig:D21}
\end{figure}

We see that there are four points where $\lambda_{21}$ vanishes to order at least $2$ on the curve $q_{21} = 0$: one pair where
\begin{align*}
27g^2 - 25 = 0 \quad&\text{and}\quad 27h^2 - 1 = 0,\\
\shortintertext{and one pair where}
27g^4 + 342g^2 - 289 = 0 \quad&\text{and}\quad 3h^4 + 27h^2 - 25 = 0.
\end{align*}
Indeed, the resultants with respect to $g$ and $h$ are given by 
\begin{align*}
  \operatorname{Res}_g(\lambda_{21}, q_{21}) &= 746496 (27h^2 - 1)^2 (3h^4 + 27h^2 - 25)^2 \\
  \intertext{and}
  \operatorname{Res}_h(\lambda_{21}, q_{21}) &= 64 (27g^2 - 25)^2 (27g^4 + 342g^2 - 289)^2.
\end{align*}
We observed similar behaviour when $D = 24$ and $D = 28$.

When $D = 40$, the singular points of the plane curve defined by the vanishing of the polynomial
\begin{equation*}
  \lambda_{40} = (g^2 - h^2 - 1) (9g^4 - 17g^2h^2 + 8h^4 - 12g^3 + 12gh^2 + 7g^2 - 8h^2 + 10g + 2)
\end{equation*}
occur at the points where
\begin{align*}
  g - 8 = 0 \quad&\text{and}\quad 2h^2 - 125 = 0, \\
  g - 9 = 0 \quad&\text{and}\quad h^2 - 80 = 0,\\
  3g + 1 = 0 \quad&\text{and}\quad h = 0.
\end{align*}

With the ansatz that $q_{40}$ is quadratic in $g$ and $h$ with rational coefficients and imposing that $q_{40}$ vanishes at the points given by the equations above, we see that $q_{40}$ is proportional to
\begin{equation}
  \label{eq:q40}
  -15g^2 + 14h^2 + 10g + 5.
\end{equation}
The expression in \eqref{eq:q40} is not identically a norm in $\Q(\sqrt{40})$ for $g,h \in \Q$, but is a norm for all of the $16$ points $(g,h)$ corresponding to genus $2$ curves given in \cite[Section~17.3]{Elkies-Kumar}, and thus seems to us to be a reasonable guess for an admissible choice of $q_{40}$.

\begin{bibdiv}
\begin{biblist}

\bib{MAGMA}{article}{
      author={Bosma, Wieb},
      author={Cannon, John},
      author={Playoust, Catherine},
       title={The {M}agma algebra system. {I}. {T}he user language},
        date={1997},
        ISSN={0747-7171},
     journal={J. Symbolic Comput.},
      volume={24},
      number={3-4},
       pages={235\ndash 265},
         url={http://dx.doi.org/10.1006/jsco.1996.0125},
        note={Computational algebra and number theory (London, 1993)},
      review={\MR{1484478}},
}

\bib{Bruin-Doerksen}{article}{
      author={Bruin, Nils},
      author={Doerksen, Kevin},
       title={The arithmetic of genus two curves with {$(4,4)$}-split
  {J}acobians},
        date={2011},
        ISSN={0008-414X},
     journal={Canad. J. Math.},
      volume={63},
      number={5},
       pages={992\ndash 1024},
         url={https://doi.org/10.4153/CJM-2011-039-3},
      review={\MR{2866068}},
}

\bib{Bending_thesis}{thesis}{
      author={{Bending}, Peter~R.},
       title={Curves of genus $2$ with $\sqrt{2}$-multiplication},
        type={Ph.D. Thesis},
        date={1998},
}

\bib{Bending_paper}{article}{
      author={{Bending}, Peter~R.},
       title={{Curves of genus 2 with sqrt2 multiplication}},
        date={1999-11},
     journal={arXiv e-prints},
       pages={math/9911273},
}

\bib{Bruin-Flynn-Shnidman}{article}{
      author={Bruin, Nils},
      author={Flynn, E.~Victor},
      author={Shnidman, Ari},
       title={Genus two curves with full {$\sqrt{3}$}-level structure and
  {T}ate-{S}hafarevich groups},
        date={2023},
        ISSN={1022-1824,1420-9020},
     journal={Selecta Math. (N.S.)},
      volume={29},
      number={3},
       pages={Paper No. 42, 33},
         url={https://doi.org/10.1007/s00029-023-00839-w},
      review={\MR{4594581}},
}

\bib{brumer:rank}{incollection}{
      author={Brumer, Armand},
       title={The rank of {$J_0(N)$}},
        date={1995},
       pages={3, 41\ndash 68},
        note={Columbia University Number Theory Seminar (New York, 1992)},
      review={\MR{1330927}},
}

\bib{electronic}{misc}{
      author={{Cowan}, Alex},
      author={{Frengley}, Sam},
      author={{Martin}, Kimball},
       title={{GitHub} repository},
        date={2024},
        note={\url{https://github.com/SamFrengley/genus-2-RM.git}},
}

\bib{CM:conjectures}{unpublished}{
      author={Cowan, Alex},
      author={Martin, Kimball},
       title={Counting modular forms by rationality field},
        note={{\em Preprint. arXiv:2301.10357}},
}

\bib{CM:RM5}{unpublished}{
      author={Cowan, Alex},
      author={Martin, Kimball},
       title={Moduli for rational genus 2 curves with real multiplication for
  discriminant 5},
        note={{J. Th\'eor. Nombres Bordeaux}, to appear. {\em
  arXiv:2206.05752}},
}

\bib{CremonaRusin_conics}{article}{
      author={Cremona, J.~E.},
      author={Rusin, D.},
       title={Efficient solution of rational conics},
        date={2003},
        ISSN={0025-5718,1088-6842},
     journal={Math. Comp.},
      volume={72},
      number={243},
       pages={1417\ndash 1441},
         url={https://doi.org/10.1090/S0025-5718-02-01480-1},
      review={\MR{1972744}},
}

\bib{Elkies-Kumar}{article}{
      author={Elkies, Noam},
      author={Kumar, Abhinav},
       title={K3 surfaces and equations for {H}ilbert modular surfaces},
        date={2014},
        ISSN={1937-0652},
     journal={Algebra Number Theory},
      volume={8},
      number={10},
       pages={2297\ndash 2411},
         url={https://doi-org.ezp.lib.cam.ac.uk/10.2140/ant.2014.8.2297},
      review={\MR{3298543}},
}

\bib{ElsenhansStoll_min}{unpublished}{
      author={Elsenhans, Andreas-Stephan},
      author={Stoll, Michael},
       title={Minimization of hypersurfaces},
         url={https://doi.org/10.1090/mcom/3924},
        note={{Math. Comp.}, to appear. {\em arXiv:2110.04625}},
}

\bib{hashimoto}{article}{
      author={Hashimoto, Ki-ichiro},
       title={On {B}rumer's family of {RM}-curves of genus two},
        date={2000},
        ISSN={0040-8735,2186-585X},
     journal={Tohoku Math. J. (2)},
      volume={52},
      number={4},
       pages={475\ndash 488},
         url={https://doi.org/10.2748/tmj/1178207751},
      review={\MR{1793932}},
}

\bib{IgusaM2}{article}{
      author={Igusa, Jun-ichi},
       title={Arithmetic variety of moduli for genus two},
        date={1960},
        ISSN={0003-486X},
     journal={Ann. of Math. (2)},
      volume={72},
       pages={612\ndash 649},
         url={https://doi.org/10.2307/1970233},
      review={\MR{114819}},
}

\bib{mestre:families}{incollection}{
      author={Mestre, J.-F.},
       title={Familles de courbes hyperelliptiques \`a multiplications
  r\'{e}elles},
        date={1991},
   booktitle={Arithmetic algebraic geometry ({T}exel, 1989)},
      series={Progr. Math.},
      volume={89},
   publisher={Birkh\"{a}user Boston, Boston, MA},
       pages={193\ndash 208},
         url={https://doi.org/10.1007/978-1-4612-0457-2_9},
      review={\MR{1085260}},
}

\bib{mestre}{incollection}{
      author={Mestre, Jean-Fran\c{c}ois},
       title={Construction de courbes de genre {$2$} \`a partir de leurs
  modules},
        date={1991},
   booktitle={Effective methods in algebraic geometry ({C}astiglioncello,
  1990)},
      series={Progr. Math.},
      volume={94},
   publisher={Birkh\"{a}user Boston, Boston, MA},
       pages={313\ndash 334},
      review={\MR{1106431}},
}

\bib{silverman-advanced}{book}{
      author={Silverman, Joseph~H.},
       title={Advanced topics in the arithmetic of elliptic curves},
      series={Graduate Texts in Mathematics},
   publisher={Springer-Verlag, New York},
        date={1994},
      volume={151},
        ISBN={0-387-94328-5},
         url={https://doi.org/10.1007/978-1-4612-0851-8},
      review={\MR{1312368}},
}

\bib{Smith_Thesis}{thesis}{
      author={Smith, Benjamin~Andrew},
       title={Explicit endomorphisms and correspondences},
        type={Ph.D. Thesis},
        date={2005},
         url={http://hdl.handle.net/2123/1066},
}

\bib{vdG}{book}{
      author={van~der Geer, Gerard},
       title={Hilbert modular surfaces},
      series={Ergebnisse der Mathematik und ihrer Grenzgebiete (3) [Results in
  Mathematics and Related Areas (3)]},
   publisher={Springer-Verlag, Berlin},
        date={1988},
      volume={16},
        ISBN={3-540-17601-2},
         url={https://doi.org/10.1007/978-3-642-61553-5},
      review={\MR{930101}},
}

\bib{VoightQuaternions}{book}{
      author={Voight, John},
       title={Quaternion algebras},
      series={Graduate Texts in Mathematics},
   publisher={Springer, Cham},
        date={2021},
      volume={288},
        ISBN={978-3-030-56692-0; 978-3-030-56694-4},
         url={https://doi.org/10.1007/978-3-030-56694-4},
      review={\MR{4279905}},
}

\end{biblist}
\end{bibdiv}
\end{document}